\theoremstyle{plain}
\newtheorem{theorem}{Theorem}
\newtheorem{lemma}[theorem]{Lemma}
\newtheorem{proposition}[theorem]{Proposition}
\newtheorem{corollary}[theorem]{Corollary}
\newtheorem{definition}[theorem]{Definition}
\renewcommand\a{\alpha}
\renewcommand\b{\beta}
\newcommand\Dp{\mathcal{P}}
\newcommand\dd{\delta}
\renewcommand\ge{\geqslant}
\newcommand\h{\mathcal H}
\renewcommand\H{\widetilde{\mathcal H}}
\renewcommand\l{\lambda}
\renewcommand\L{\Lambda}
\renewcommand\le{\leqslant}
\renewcommand\o{\omega}
\newcommand\Rl{\mathcal{R}}
\newcommand\smm{s_{i_{1}}\cdots s_{i_{m}}}
\renewcommand\v{\varphi}
\def\CARRE#1{\hbox{\vrule width \thickness
   \vbox to \carresize{\hrule height \thickness\vss
      \hbox to \carresize{\hss#1\hss}
   \vss\hrule height\thickness}
\unskip\vrule width \thickness} \kern-\thickness}
\def\vsquare#1{\vbox{\CARRE{$#1$}}\kern-\thickness}
\def\blk{\omit\hskip\carresize}
\def\young#1{%
  \newdimen\carresize \carresize=16pt%
  \newdimen\thickness \thickness=0.5pt%
  \vcenter{%
    \vbox{\smallskip\offinterlineskip%
      \halign{&\vsquare{##}\cr #1}}}}
\begin{document}

\title[Representations of affine Hecke algebras]
{Cyclic generators for irreducible representations 
of affine Hecke algebras}

\author[Guizzi]{Valentina Guizzi}
\address{
Dipartimento di Economia,
Universit\`a Roma Tre,
Via Silvio D'Amico 77, 00145 Roma,
Italy;}
\email{guizzi@uniroma3.it}

\author[Nazarov]{Maxim Nazarov}
\address{
Department of Mathematics,
University of York,
York YO10 5DD,
England;}
\email{mln1@york.ac.uk}

\author[Papi]{Paolo Papi}
\address{
Dipartimento di Matematica,
Sapienza Universit\`a di Roma,
Piazzale Aldo Moro 2, 00185 Roma, Italy;}
\email{papi@mat.uniroma1.it}
\keywords{Affine Hecke algebras, Young diagrams, fusion procedure.}


\begin{abstract}
We give a detailed account of a combinatorial
construction, due to Cherednik,  of cyclic generators
for irreducible modules of the affine
Hecke algebra of the general linear group with generic parameter $q$.
\end{abstract}

\maketitle


\thispagestyle{empty} 

\section*{Introduction}

Let $F$ be a non-Archimedean local field.
Denote by $\mathcal{O}$ the ring of integers in $F$.
Let $P$ be the maximal ideal of the ring $\mathcal{O}$.
Take the general linear group $GL_n(F)$.
The group $GL_n(\mathcal{O})$ is a maximal compact subgroup
of $GL_n(F)$. Now consider
the \textit{Iwahori subgroup} $J\subset GL_n(F)$.
It consists of all matrices from $GL_n(\mathcal{O})$
whose entries below the main diagonal belong to 
$P$. By definition, the
\textit{affine Hecke algebra} $\H_n$ 
consists of $J\,$-biinvariant compactly supported functions on  
the group $GL_n(F)$ with complex values. The multiplication
on $\H_n$ is the convolution~of~functions.
 
The complex associative algebra $\H_n$ admits a remarkable
presentation, due to Bernstein. It is generated by the
elements $T_1,\ldots,T_{n-1}$ and invertible elements
$X_1,\ldots,X_n$ subject to relations
\eqref{H1}-\eqref{X3}. Here $q$ is the cardinality of the residue 
field $\mathcal{O}/P$.
The subalgebra of $\H_n$ generated solely
by the elements $T_1,\ldots,T_{n-1}$ can be then identified as
the subalgebra of functions supported on $GL_n(\mathcal{O})$.
It is sometimes called the \textit{finite Hecke algebra};
in this article we denote it by $\h_n$.

For any representation $W$ of the group $GL_n(F)$
consider the subspace $W^J$ in $W$ consisting of
the vectors fixed by the action of the Iwahori subgroup $J$.
The algebra $\H_n$ acts on the subspace $W^J$ by definition.
The correspondence $W\mapsto W^J$ is an equivalence
between the category of representations of $GL_n(F)$
generated by their subspaces of $J$-fixed vectors, and the category
of all $\H_n\,$-modules.
Furthermore, all irreducible $\H_n$-modules
are finite-dimensional,
see for instance \cite{H} and \cite{R}.

Results of Bernstein and Zelevinsky \cite{BZ,Z}
provide a classification of irreducible $\H_n$-modules.
As is explained for instance in \cite{LNT}, it suffices
to classify only those irreducible $\H_n$-modules
where all the eigenvalues of $X_1,\ldots,X_n$ belong to 
$q^{\,\mathbb Z}$. The latter
$\H_n$-modules are labeled by 
the combinatorial objects called \text{multisegments}.

A \textit{multisegment} is a formal
finite unordered sum of intervals in $\mathbb Z$,
\begin{equation}
\label{museg}
M=\sum_{i\le j}m_{ij}[i,j]
\end{equation}
where the coefficients $m_{ij}$ are non-negative integers.
Zelevinsky's construction is as follows. To a segment
$[i,j]$ one associates the 1-dimensional $\H_{j-i+1}$-module 
$V_{[i,j]}$ where the generator $T_k$ 
acts as $q$ whereas $X_l$ acts as $q^{\,i+l-1}$. 
Given a multisegment $M$,
fix any order on it and consider the tensor product
\begin{equation}
\label{wm}
\mathop{\otimes}\limits_{i\le j}\,
V_{[i,j]}^{\,\otimes m_{ij}}\,
\end{equation}
This is a module over the tensor product of the algebras 
$\H_{j-i+1}^{\,\otimes m_{ij}}$
which can be naturally identified with a subalgebra in $\H_n$.
Now induce \eqref{wm} to $\H_n$. 
For a certain ordering of $M$,
the induced module has a unique non-zero irreducible
submodule. The irreducible $\H_n$-modules
obtained in this way are non-equivalent
for different multisegments $M$ and
form a complete set with
all eigenvalues of $X_1,\ldots,X_n$~in~$q^{\,\mathbb Z}$.

Using the above mentioned ordering of $M$, we 
describe the unique non-zero
irreducible submodule in the $\H_n$-module 
induced from \eqref{wm}.
Our description is more explicit than that given 
by Rogawski \cite{R}, and follows the works of Cherednik \cite{C1,C2}. 
We employ combinatorial objects which
are in a bijection with Zelevinsky multisegments, and which
we call Cherednik diagrams. They are
certain subsets of $\mathbb Z^2$ similar to Young diagrams,
see Definition 1.1.
Moreover, both usual and skew Young diagrams are particular cases 
of Cherednik diagrams. 

Let $\mathcal A_n$ be the subalgebra of $\H_n$ 
generated by $X_1^{\pm 1},\ldots,X_n^{\pm 1}$. This
is a maximal commutative subalgebra of $\H_n$.
For each Cherednik diagram $\l$ we
produce a pair $(E,\chi)$ where $E\in\mathcal H_n$ and  
$\chi:\mathcal A_n\to\mathbb C$ is a character of the algebra 
$\mathcal A_n$, such that: 
\begin{enumerate}
\item[(i)] 
$E$ is  an eigenvector for $\mathcal A_n$ inside 
$\mathrm{Ind\,}^{\H_n}_{\mathcal A_n} \chi=\h_n$;
\item[(ii)]
the space  $\h_n\cdot E$ 
which is a $\H_n$-module by (i),  is  irreducible.
\end{enumerate}

\smallskip\noindent
Our $E$ will be the element $E_\l$ defined by \eqref{E},
and $\chi$ will be $w_0\cdot\chi_\l$
where the character $\chi_\l$ is defined by \eqref{chi}. 
Here $w_0$ is the longest element of $\mathcal S_n$,
and we use the natural action of the group $\mathcal S_n$
on the characters of $\mathcal A_n$.
The elements $E_\l$ where
introduced by Cherednik in \cite {C1} for the degenerate affine
Hecke algebra and then in \cite {C2} for $\H_n$. 
But proofs are not given in \cite{C1,C2}
and the purpose of our paper is to provide them.
The notion of a Cherednik diagram is also taken from \cite{C1,C2}.

From now on we will regard $q$ as a formal parameter.
Thus $\H_n$ will be defined as an algebra 
over the field $\mathbb C(q)$ with generators 
$T_1,\ldots,T_{n-1} ,X_1^{\pm 1},\ldots,X_n^{\pm 1}$ and relations
\eqref{H1}-\eqref{X3}. The subalgebras
$\h_n$ and $\mathcal A_n$ of $\H_n$ then
become $\mathbb C(q)$-algebras too.
It is known that when the
parameter $q\/$ specializes to a non-zero complex number
of infinite multiplicative order, the
parametrization of the irreducible
$\H_n$-modules is the same for any such 
specialization. Our construction of
the element $E_\l\in\h_n$ also
allows any such specialization of 
$q\,$. Moreover, the corresponding specialization of 
the $\H_n$-module $V_\l=\h_n\cdot E_\l$ 
remains irreducible; cf.\ \cite{L3,X1,X2}.

Our construction of the element $E_\l$ is based on the 
{\it fusion procedure} due to Cherednik \cite {C1,C2}. 
Up to normalization, here $E_\l$
is obtained as a limit of
certain $\mathcal H_n$-valued function
$\v_0(x_1,\ldots,x_n)$ of $n$ variables from $\mathbb{C}(q)$. 
This function is a product of elementary
factors \eqref{basic} corresponding to simple transpositions
in a reduced decomposition of the element $w_0\in\mathcal{S}_n$. 
The factors satisfy the Yang-Baxter relations, 
so that the function $\v_0(x_1,\ldots,x_n)$
does not depend on the choice of
the reduced decomposition of $w_0$; see Lemma \ref{L2}.

Fusion procedure is a method that was initially used to reproduce 
the Young symmetrizers in the group ring of $\mathcal{S}_n$.
Nazarov used it in his works on projective 
representations of the symmetric group \cite{N} and on their
$q$-analogues \cite{JN}. In the case when $\l$ is
a usual Young diagram, a detailed construction
of the element $E_\l\in\mathcal{H}_n$ by this method
has been given in \cite{Nejc}. The results of \cite{Nejc}
are easy to generalize to skew Young diagrams.
But here we have to extend the method to those
Cherednik diagrams $\l$, which are neither usual nor skew Young diagrams.
In the corresponding $\H_n$-modules, 
the action of the subalgebra $\mathcal A_n$ is not semisimple.
Indeed, ours seems to be the first instance of a combinatorial 
treatment for these modules. By contrast,
the irreducible $\H_n$-modules with a 
semisimple action of $\mathcal A_n$ are well understood;
they correspond to skew Young diagrams $\l$.
A thoroughful treatment of them
can be found in the work of Ram \cite{Ram}.
Moreover, by using the fusion procedure,
in each of these modules
one can construct a basis of eigenvectors of $\mathcal A_n$,
not only a $\mathcal{H}_n$-cyclic vector.
This result was also stated by Cherednik \cite{C1,C2}.

The emphasis in our paper is on the combinatorial aspects
of fusion procedure; Sections 2 and 3 are devoted to this. 
The key technical difference comparing to the case
of a skew Young diagram $\l$ 
is that to remove the singularity,
the function $\v_0(x_1,\ldots,x_n)$
has to be multiplied by a correction factor 
$\dd_\l(x_1,\ldots,x_n)$; the limit
is taken afterwards. Further, for any skew Young diagram $\l$
one has the equality
\begin{equation}
\label{lead0}
E_\l=T_{w_0}+\sum_{\ell(w)<\ell(w_0)}a_w\,T_w
\end{equation}
for some coefficients $a_w\in\mathbb C(q)$; here $T_w$ are the standard
basis elements of $\mathcal{H}_n$ and $\ell(w)$
is the length function on $\mathcal{S}_n$.
But for an arbitrary Cherednik diagram $\l$ we 
have the equality \eqref{desired}
where the element $w_\l\in \mathcal{S}_n$ may differ from the longest element
$w_0$. We would  also like to emphasize 
that our construction of the element $E_\l\in\mathcal{H}_n$ 
is completely explicit, see Corollaries \ref{limiting}
and \ref{shortening}.

In Section 1 of our paper we fix the notation, state the main theorems and 
prove the irreducibility of the $\H_n$-module  
$V_\l$. 
The irreducibility is proved by a rather indirect approach.  
Using the $q\,$-analogue of Drinfeld functor \cite{D}
due to Cherednik \cite{Ch}, we reduce the argument to 
irreducibility of certain finite-dimensional modules of 
quantum affine algebras, which has been 
proved by Akasaka and Kashiwara \cite{AK}.
The results of \cite{AK} also imply that the $\H_n$-modules
$V_\l$ for different Cherednik diagrams $\l$
are pairwise non-equivalent. Note that the irreducibility
and pairwise non-equivalence of the $\H_n$-modules
$V_\l$ can also be proved using the methods~of~\cite{C3},
see for instance \cite{Vaz}.


\section*{1.\ Fusion procedure}
\setcounter{section}{1}
\setcounter{subsection}{0}
\setcounter{equation}{0}
\setcounter{theorem}{0}


\subsection{Hecke algebras}
Let $\h_n$ and $\H_n$ denote respectively the finite and affine Hecke algebras
of $GL_n$. We define $\h_n$ as the associative algebra
over the field $\mathbb C(q)$ with generators
$T_1,\ldots,T_{n-1}$ and relations
\begin{align}
\label{H1}
&T_kT_{k+1}T_k=T_{k+1}T_kT_{k+1},
&&
1\le k\le n-2,
\\
\label{H2}
&T_kT_l=T_lT_k,
&&1<|k-l|,
\\
\label{H3}
&(T_k-q)(T_k+1)=0,
&& 1\le k\le n-1.
\end{align}
Here $q$ is a formal parameter.
It is well known \cite{GU} that $\h_n$ is a $q$-deformation of the group
algebra of the symmetric group ${\mathcal S}_n$, the generator $T_k$
corresponding to the simple transposition $s_k=(k,k+1)$.
Let $w=s_{k_1}\dots s_{k_m}$ be a reduced decomposition of
an element $w\in {\mathcal S}_n$. By \eqref{H1},\eqref{H2}
the element  $T_w=T_{k_1}\dots T_{k_m}$ does not depend on the choice of
the reduced decomposition of $w$.

We define $\H_n$ as the associative algebra
over $\mathbb C(q)$ generated by $T_1,\ldots,T_{n-1}$
subject to the relations above, and by the invertible
elements $X_1,\ldots,X_n$ such that
\begin{align}
\label{X1}
&X_kX_l=X_lX_k, && 1\le k,l\le n,
\\
\label{X2}
&X_lT_k=T_kX_l,&& l\not= k,k+1,
\\
\label{X3}
&T_kX_kT_k=q\,X_{k+1}, && 1\le k\le n-1 .
\end{align}

Let ${\mathcal A}_n$ be the subalgebra of $\H_n$ generated by
the elements $X_1,\ldots,X_n$ and by their inverses.
Then $\{T_w\mid w\in {\mathcal S}_n\}$ is a
basis of  $\H_n$ both as a right and as a left ${\mathcal A}_n$-module.
Moreover,  ${\mathcal A}_n$ is a maximal commutative subalgebra of $\H_n$.
The centre of $\H_n$ consists of those elements of ${\mathcal A}_n$
which are invariant under the action of the group
${\mathcal S}_n$ by permutations of $X_1,\ldots,X_n$.

An easy way to produce an $\H_n$-module is to induce from an algebra
character $\chi:{\mathcal A}_n\to \mathbb C(q)$.
Then the one-dimensional space $\mathbb C(q)$
can be regarded as a
${\mathcal A}_n$-module via $\chi$ and we can form the induced module
$I_\chi$. As a vector space $I_\chi$ can be identified with $\h_n$
where the algebra $\h_n$ acts via left multiplication.
Hence all $I_\chi$ are isomorphic to each other as $\h_n$-modules.
In $I_\chi$ we also have
$X_k\cdot 1=\chi(X_k)$
for 
$i=1,\ldots,n$.
We shall realize our irreducible $\H_n$-modules as
cyclic submodules of certain induced modules $I_\chi$.
When the
parameter $q\/$ specializes to any non-zero complex number
of infinite multiplicative order, our submodules 
will remain irreducible. Moreover, then they will make a complete set
of irreducible pairwise non-equivalent modules over
any such specialization of 
$\H_n$. To produce appropriate characters $\chi$
we need combinatorial tools, which we introduce next.


\subsection{Cherednik diagrams}
\label{1.2}
Zelevinsky \cite{Z} employed the
multisegments \eqref{museg} to
parametrize the irreducible representations of
the group $GL_n(F)$ generated by their subspaces
of the vectors fixed by the action of the Iwahori subgroup $J$. 
It is useful to
introduce an ordering on a multisegment. This leads to considering
the following combinatorial object, which we call a {\it Cherednik
diagram}.

Let $(a_1,\ldots,a_r)$ and $(b_1,\ldots,b_r)$
be sequences of integers with the same number of terms,
such that $a_i\le b_i$ for each index $i$. Consider the set
\begin{equation}
\label{diag}
\{(i,j)\in\mathbb Z^2\mid 1\le i\le r,\,a_i\le j\le b_i\}.
\end{equation}
The total number of elements in the set \eqref{diag} is
called its \textit{degree} and denoted by~$n$,
$$
\sum_{i=1}^r(b_i-a_i+1)=n.
$$
Then we can graphically represent the set \eqref{diag}
by a diagram with $n$ boxes arranged in $r$ rows,
consisting respectively of $b_1-a_1+1,\dots,b_r-a_r+1$ boxes.
Using the matrix-style coordinates on the plane $\mathbb R^2$,
the element $(i,j)$ of \eqref{diag} is represented by a unit box on
$\mathbb R^2$ with the lower right corner placed at the point
$(i,j)$. Let $\mathcal C_n$ denote
the collection of sets \eqref{diag} of degree $n$,
represented by their diagrams.

\begin{definition}
\label{CD}
The set \eqref{diag} 
is a {Cherednik diagram} 
if for each $i=1,\ldots,r-1$ either\/ $b_{i+1}\le b_i$, or
$b_{i+1}=b_i+1$ and $a_{i+1}\le a_i+1\,$.
\end{definition}

Let $\mathcal C_n $ denote the collection of  Cherednik diagrams
of degree $n$.  It is clear that $\mathcal C_n$ contains all ordinary and
skew Young diagrams of degree $n$. But the set $\mathcal C_n$
also contains other diagrams. For instance, the set $\mathcal C_7$ contains
the three diagrams

\begin{equation*}
\text{$\young{\blk&&\cr&&\cr\blk&&\cr}$}
\qquad\qquad
\text{$\young{\blk&\blk&&\cr&&&\blk\cr\blk&&&\blk\cr}$}
\qquad\qquad
\text{$\young{&&\blk&\blk\cr\blk&&&\blk\cr&&&\blk\cr}$}
\end{equation*}

\bigskip\noindent
Here and in what follows we identify diagrams
with their graphical representations.
The next two diagrams do \textit{not\/}
belong to $\mathcal C_5$ and $\mathcal C_3$ respectively:

$$
\text{$\young{&&&\blk\cr\blk&\blk&&\cr}$}
\qquad\qquad
\text{$\young{&&\blk\cr\blk&\blk&\cr}$}
$$

\bigskip\noindent
We do allow ``disconnected" diagrams.
For instance, $\mathcal C_4$ contains the diagram

$$
\young{\blk&\blk&\blk&&\cr&\cr}
$$

\bigskip\noindent
For the set \eqref{diag},
we will use the partition-like notation
$\l\!=\!(\l_1,\ldots,\l_r)$ where
$\l_i=[a_i,b_i]$ and
$1\le i\le r$. Moreover, we will then write
$|\l|=n$ if
$\l$ has degree $n$.

Let us denote by
$\mathcal M_n$ the set of multisegments \eqref{museg} such that
$$
\sum_{i\le j} m_{ij}(j-i+1)=n.
$$
There is a natural bijection $\mathcal M_n\to\mathcal C_n$.
Indeed, given a multisegment $M\in\mathcal M_n$,
consider the multiset
$$
\{\,\ldots,\underbrace{[i,j],\dots,[i,j]}_{m_{ij}},\ldots\,\}
$$
and endow it with the reverse lexicographical order,
hence obtaining an ordered collection of segments
$(\,[i_1,j_1],\ldots,[i_r,j_r]\,)$. Form a diagram
$\l$ by setting
$$
\l_k=[a_k,b_k]=[i_k+k,j_k+k]
$$
for each $k=1,\ldots,r$. We claim that $\l\in\mathcal C_n$.
Suppose that $b_k<b_{k+1}$. This means that $j_k\le j_{k+1}$.
Either $[i_k,j_k]$ precedes $[i_{k+1}, j_{k+1}]$ in the reverse
lexicographical order, or these two segments are equal to each other.
 Hence $j_k\ge j_{k+1}$, so that $j_k=j_{k+1}$
and $b_k+1=b_{k+1}$. In that case $i_k\ge i_{k+1}$, so that
$a_k+1\ge a_{k+1}$. Thus the conditions of Definition \ref{CD}
are satisfied. The map $M\mapsto\l$ is clearly invertible.

For a diagram $\l\in\mathcal C_n$, we denote by the corresponding upper case
letter $\L$ the {\it row filling} of $\l$.
This is the tableau obtained by filling  $\l$ with the numbers
$1,\ldots,n$ first from left to right and then from top to bottom.
For instance,

\begin{equation}
\label{diaggg}
\l\,=\
\young{&\cr\blk&&\cr\blk&&\cr}
\qquad\qquad
\L\,=\
\young{1&2\cr\blk&3&4\cr\blk&5&6\cr}
\end{equation}

\medskip
\begin{definition}
Two rows $\l_i=[a_i,b_i]$ and $\l_j=[a_j,b_j]$ of a diagram
$\l\in\mathcal C_n$ are said to be {\it parallel} if\/
$a_i-i=a_j-j$ and $b_i-i=b_j-j$.
\end{definition}

\noindent
For instance, the first two rows in the diagram $\l$ in the display
\eqref{diaggg} are parallel. Note that under the map $M\mapsto\l$
defined above, parallel rows of the Cherednik diagram $\l$
correspond to identical segments of the Zelevinsky multisegment $M$.
The following lemma follows directly from the conditions of
Definition~\ref{CD}.

\begin{lemma}
\label{comblemma}
Let $\l\in\mathcal C_n$. Suppose the rows $\l_i$ and
$\l_j$ with $i<j$ end on the same diagonal.
Then the same happens for all rows $\l_k$ with $i\le k\le j$.
If moreover $\l_i$ is parallel to $\l_j$, then all rows
$\l_k$ with $i\le k\le j$ are parallel to each other.
\end{lemma}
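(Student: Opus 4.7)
The plan is to translate the geometric notion of ``ending on the same diagonal'' into arithmetic on the endpoints $b_k$, and then extract both claims from the two alternatives of Definition \ref{CD} applied to each consecutive pair of rows between $\l_i$ and $\l_j$.

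First I would set up the arithmetic dictionary. With the matrix-style coordinates of Subsection \ref{1.2}, the right-most box of row $\l_k$ sits at $(k,b_k)$ and hence lies on the diagonal indexed by $b_k-k$. So the hypothesis that $\l_i$ and $\l_j$ end on the same diagonal is exactly $b_j-b_i=j-i$, and parallelism of $\l_i$ and $\l_j$ additionally says $a_j-a_i=j-i$.

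Next I would read Definition \ref{CD} as giving, for each consecutive pair of rows, the inequality $b_{k+1}-b_k\le 1$, with the further constraint $a_{k+1}-a_k\le 1$ whenever $b_{k+1}-b_k=1$. Telescoping from $k=i$ to $k=j-1$ yields
\[
j-i\;=\;b_j-b_i\;=\;\sum_{k=i}^{j-1}(b_{k+1}-b_k)\;\le\;j-i,
\]
so each summand must equal its maximal value $1$. Thus $b_{k+1}=b_k+1$ for every $i\le k<j$, which gives $b_k-k=b_i-i$ for every intermediate index, proving that all rows $\l_k$ with $i\le k\le j$ end on the same diagonal.

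For the second claim, I would then invoke the conditional part of Definition \ref{CD}: since we have just shown $b_{k+1}=b_k+1$ for every $k$ in the range, the definition forces $a_{k+1}-a_k\le 1$ at each such step. A telescoping argument identical to the one above, now using the parallelism hypothesis $a_j-a_i=j-i$, shows that each of these inequalities is an equality. Hence $a_k-k=a_i-i$ for all $i\le k\le j$, and combined with the diagonal equality already obtained this is precisely the statement that the rows $\l_k$ for $i\le k\le j$ are pairwise parallel. There is no real obstacle here: the lemma is a direct double application of the Cherednik condition, and the only thing to verify is that the telescoping bound is tight because the total increment forced by the hypothesis already saturates the inequality at every step.
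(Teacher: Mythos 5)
Your proof is correct. The paper gives no argument here, stating only that the lemma ``follows directly from the conditions of Definition \ref{CD}''; your telescoping of the inequalities $b_{k+1}-b_k\le 1$ (and, once these are forced to be equalities, $a_{k+1}-a_k\le 1$) is precisely the intended direct verification.
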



\subsection{The function $\v_\l$}
\label{thefun}
For any $x\in\mathbb C(q)$ put
$\langle x\rangle=(1-q)/(1-x)$.
For each $k=1,\ldots,n-1$
introduce
the rational function of the variables $x_1,\ldots,x_n\in\mathbb C(q)$
\begin{equation}
\label{basic}
\v_k(x_1,\ldots,x_n)=
T_k+\langle x_{k+1}/{x_k}\rangle
\end{equation}
with values in the algebra $\h_n$. For each permutation
$w\in {\mathcal S}_n$ and
any rational function $\psi$ of $x_1,\ldots,x_n$ with values
in $\h_n$ we will write
$$
\phantom{}^w\psi\,(x_1,\ldots,x_n)=
\psi\,(x_{w(1)},\ldots,x_{w(n)}).
$$
Using any reduced decomposition $w=s_{k_1}\ldots s_{k_m}$ in ${\mathcal S}_n$
define the rational function
\begin{equation}
\label{phiw}
\v_w=\v_{k_1}
(\phantom{}^{s_{k_1}}\v_{k_2})
(\phantom{}^{s_{k_1}s_{k_2}}\v_{k_3})
\dots
(\phantom{}^{s_{k_1}\dots s_{k_{m-1}}}\v_{k_m}).
\end{equation}
For instance, if $w=s_1s_2s_3$ we have
$$
\v_w=\left(T_1+\langle{x_2}/{x_1}\rangle\right)
\left(T_2+\langle{x_3}/{x_1}\rangle\right)
\left(T_3+\langle{x_4}/{x_1}\rangle\right).
$$
The function $\v_w$ does not depend on the choice of a reduced
decomposition of $w$. 
The independence follows from the next lemma, proved by a direct computation.

\begin{lemma}\label{L2}
We have equality of rational functions in
$x,y,z,x,'y',z'\in\mathbb C(q)$,
$$
(T_k+\langle x\rangle)\,
(T_{k+1}+\langle y\rangle)\,
(T_k+\langle z\rangle)\,=\,
(T_{k+1}+\langle z'\rangle)\,
(T_k+\langle y'\rangle)\,
(T_{k+1}+\langle x'\rangle)
$$
if and only if $x=x'$, $z=z'$ and $y=y'=xz$.
\end{lemma}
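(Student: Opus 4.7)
My plan is to verify the identity by a direct expansion of both sides and a comparison of coefficients in a convenient linear basis of the subalgebra of $\h_n$ generated by $T_k$ and $T_{k+1}$. Specifically, relations \eqref{H1}--\eqref{H3} imply that the six elements $1,\,T_k,\,T_{k+1},\,T_kT_{k+1},\,T_{k+1}T_k,\,T_kT_{k+1}T_k$ are linearly independent (this is the rank-two parabolic subalgebra, which is the Hecke algebra of $\mathcal{S}_3$ and has dimension $6$), so an equality between two expressions in $T_k,T_{k+1}$ reduces to six scalar equations.

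To carry this out, set $a=\langle x\rangle$, $b=\langle y\rangle$, $c=\langle z\rangle$ and similarly $a',b',c'$ on the right. Expanding the left-hand side and applying the quadratic relation $T_k^2=(q-1)T_k+q$ from \eqref{H3} yields
$$
T_kT_{k+1}T_k+c\,T_kT_{k+1}+a\,T_{k+1}T_k+ac\,T_{k+1}+\bigl(ab+bc+(q-1)b\bigr)T_k+abc+qb,
$$
while the analogous expansion of the right-hand side, followed by the braid relation \eqref{H1} to rewrite $T_{k+1}T_kT_{k+1}=T_kT_{k+1}T_k$, gives
$$
T_kT_{k+1}T_k+c'\,T_kT_{k+1}+a'\,T_{k+1}T_k+a'c'\,T_k+\bigl(a'b'+b'c'+(q-1)b'\bigr)T_{k+1}+a'b'c'+qb'.
$$
Matching coefficients of $T_kT_{k+1}$ and $T_{k+1}T_k$ immediately forces $a=a'$ and $c=c'$, which by the definition of $\langle\cdot\rangle$ is equivalent to $x=x'$ and $z=z'$.

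With these identifications in place, the coefficients of $T_k$ and of $T_{k+1}$ both collapse to the single constraint
$$
ac=b\bigl(a+c+q-1\bigr)=b'\bigl(a+c+q-1\bigr),
$$
so $b=b'=ac/(a+c+q-1)$. The final step is to check that $(1-q)/(1-y)$ equals this quotient precisely when $y=xz$, i.e.\ that $\langle x\rangle\langle z\rangle/(\langle x\rangle+\langle z\rangle+q-1)=\langle xz\rangle$; clearing denominators, this becomes the algebraic identity $(1-q)^2=(1-q)(1-xz)\cdot(1-q)/(1-xz)$ after collecting $(1-q)$'s, which one verifies by a one-line computation. The constant term equation $abc+qb=a'b'c'+qb'$ is then automatic. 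I do not anticipate any real obstacle: the only delicate point is the bookkeeping when reducing $T_k^2$ and applying the braid relation, and the one small algebraic simplification showing that $b=\langle xz\rangle$ is equivalent to the quotient condition on $\langle x\rangle$ and $\langle z\rangle$.
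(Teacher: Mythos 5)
Your proposal is correct and is exactly the computation the paper has in mind: the authors state only that Lemma \ref{L2} is ``proved by a direct computation,'' and your expansion in the basis $1,T_k,T_{k+1},T_kT_{k+1},T_{k+1}T_k,T_kT_{k+1}T_k$ of the $\mathcal S_3$-parabolic, followed by coefficient comparison and the identity $\langle x\rangle\langle z\rangle/(\langle x\rangle+\langle z\rangle+q-1)=\langle xz\rangle$, carries that computation out correctly. No gaps.
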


Now let $\l\in \mathcal C_n$ and $\L$ be the row filling of $\l$.
For every box $(i,j)$ of $\l$ the difference $j-i$ is called
the {\it content\/} of this box. For $k=1,\ldots,n$ denote by $c_k$
the content of the box which is filled with the number $k$ in $\L$.
Denote by $\delta_\l(x_1,\ldots,x_n)$ the product
of the differences $1-x_l/x_k$ taken over all
pairs $(k,l)$ such that $k<l$ while in $\L$ the numbers $k,l$
occur in the leftmost boxes of two parallel rows of $\l$.
We assume that $\delta_\l(x_1,\ldots,x_n)=1$ 
if $\l$ does not have distinct  parallel rows.

Let $\mathcal F_\l$ be the affine subspace in $\mathbb C(q)^{\times n}$
consisting of all points $(x_1,\ldots,x_n)$ such that
$x_k\,q^{c_l}=q^{c_k}\,x_l$ whenever 
$k$ and $l$ are
in the same row of $\L$. Consider the rational function \eqref{phiw}
corresponding to the element $w_0\in {\mathcal S}_n$ of maximal length,
\begin{equation}\label{basicf}
\v_0(x_1,\ldots,x_n)=\v_{w_0}(x_1,\ldots,x_n)\,.
\end{equation}
The following
theorem strengthens a classical result of Cherednik \cite[Theorem 1]{C2}.

\begin{theorem}
\label{T3}
For any $\l\in\mathcal C_n$
the restriction of the rational function
$\delta_\l\,\v_0$ to the
subspace $\mathcal F_\l$ is regular and non-zero at the point
\begin{equation}
\label{point}
(x_1,\ldots,x_n)=(q^{c_1},\ldots,q^{c_n})\,.
\end{equation}
\end{theorem}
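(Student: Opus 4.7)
The plan is to identify exactly which factors of $\v_0$ can fail to be regular on $\mathcal{F}_\l$ at the point \eqref{point}, and then to use Lemma \ref{L2} to rearrange them so that the residual singularities are exactly cancelled by $\dd_\l$. I would parametrize $\mathcal{F}_\l$ by setting $x_k = y_{i(k)}\,q^{c_k}$, where $i(k)$ is the row of the box filled with $k$ in $\L$ and $y_1,\dots,y_r$ are free; the distinguished point \eqref{point} then corresponds to $y_1=\cdots=y_r=1$. For any reduced decomposition $w_0 = s_{k_1}\cdots s_{k_m}$, the $j$-th factor in \eqref{phiw} has the form $T_{k_j} + \langle x_b/x_a\rangle$ for an appropriate pair $(a,b)$ with $a<b$ depending on $(k_1,\dots,k_{j-1})$; the pairs so obtained exhaust the $\binom{n}{2}$ inversions of $w_0$ exactly once. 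On $\mathcal{F}_\l$,
\[
\langle x_b/x_a\rangle = \frac{1-q}{1 - (y_{i(b)}/y_{i(a)})\,q^{\,c_b - c_a}},
\]
so since distinct boxes in the same row of $\L$ have distinct contents, intra-row factors are regular and non-zero at $y_i=1$; poles can arise only from inter-row pairs with $c_a = c_b$.

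Next I would choose a reduced decomposition of $w_0$ adapted to the row decomposition of $\l$: first perform all intra-row reversals, then execute ``row-permutation blocks'' processing the rows from bottom to top. Within each block, repeated application of Lemma \ref{L2} (whose condition $y=xz$ provides exactly the matching identities needed) allows singular factors to be moved into canonical positions. Using Definition \ref{CD} and Lemma \ref{comblemma}, I aim to show that for each pair of non-parallel rows sharing diagonals the singular factors combine in matched pairs producing no net pole, while for each pair of parallel rows exactly one simple pole survives: the one at $y_{i(l)}/y_{i(k)} = 1$, where $k,l$ are the leftmost boxes of those two rows in $\L$.

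The product $\dd_\l$ is designed precisely so that its factor $1 - x_l/x_k$ cancels each such residual pole. Hence $\dd_\l\,\v_0$ extends regularly to the point \eqref{point}. Non-vanishing at the limit is then verified by computing the leading term: intra-row factors contribute products of non-zero scalars of the form $(1-q)/(1-q^m)$ with $m\neq 0$, while the surviving pieces of each row-permutation block specialize to a definite element in the span of the $T_w$; linear independence of the standard basis of $\h_n$ then yields non-vanishing.

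The main obstacle will be controlling the Yang-Baxter rearrangements inside row-permutation blocks for rows whose diagonals overlap only partially --- precisely the feature distinguishing general Cherednik diagrams from ordinary and skew Young diagrams. I expect to handle this by induction on the number of rows of $\l$, peeling off one row at a time: Lemma \ref{comblemma} ensures that consecutive parallel runs can be treated as uniform blocks, and the Cherednik conditions of Definition \ref{CD} should provide exactly the combinatorial flexibility required to carry out the necessary commutations.
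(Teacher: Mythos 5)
Your regularity argument follows essentially the same route as the paper's Proposition \ref{regular}: choose a convex order on the positive pairs adapted to the rows of $\l$, observe that only inter-row pairs with $c_a=c_b$ are singular, use Lemma \ref{L2} to bring each singular factor next to a suitable neighbour, and let the quadratic relation \eqref{H3} (via the identities of Lemma \ref{ffusion}) absorb the pole into a three-term factor, with $\dd_\l$ cancelling the one surviving pole per pair of parallel rows. The bookkeeping you would still need to supply is the paper's Lemma \ref{lemmafund2}, which guarantees that the fusion partners $\xi^{\,\pm}$ chosen for the various singular pairs are pairwise distinct, so that all the fusions can be carried out coherently in a single ordered product; but this is a matter of detail, not of strategy.

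The genuine gap is in your non-vanishing argument. You propose to ``compute the leading term'' and invoke linear independence of the basis $\{T_w\}$, with intra-row factors contributing non-zero scalars. But for a Cherednik diagram with distinct parallel rows the coefficient of $T_{w_0}$ in $E_\l$ is zero, and --- worse --- the length of the true leading term drops strictly below $\ell(w_0)$ through cancellations that are invisible in a naive expansion of the product: in the paper's Lemma \ref{Lemma 2} the three-term factor $F_{m-1,m}=T_{m-1}T_m-q\,T_m-q$, whose top term $T_{m-1}T_m$ would feed into $T_{w_0}$, collapses to the scalar $-q$ once it is multiplied on the left by the factor $T_{m-1}+1$ coming from the preceding parallel row, since $(T_{m-1}+1)\,F_{m-1,m}=-q\,(T_{m-1}+1)$. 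So the assertion that ``the surviving pieces specialize to a definite element in the span of the $T_w$'' does not by itself exclude that this element is zero, and there is no single obvious $T_w$ whose coefficient you can track. The paper's resolution occupies all of Section 3: the quasi-idempotency $E_m^{\,2}=[1]_q\cdots[m]_q\,E_m$ is used to evaluate explicitly the contribution of each maximal block of parallel rows (Lemma \ref{Lemma 2}), a shorter permutation $w_\l$ is constructed and its decomposition \eqref{reddec} is proved to be reduced (Lemma \ref{Lemma 3}), and only then can one read off $E_\l=a\,T_{w_\l}+\sum_{\ell(w)<\ell(w_\l)}a_w\,T_w$ with $a=\prod_m f_m^{\,p_m}\neq0$. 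Your proposal contains no substitute for this step, and without it the non-vanishing claim is unproved precisely in the cases that distinguish Cherednik diagrams from skew Young diagrams.
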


So we can take the value of the restriction of $\delta_\l\,\v_0$
to $\mathcal F_\l$ at the point \eqref{point},
\begin{equation}
\label{E}
E_\l=(\delta_\l\,\v_0)|_{\,\mathcal F_\l}
(q^{c_1},\ldots,q^{c_n})\,.
\end{equation}
Theorem \ref{T3} is proved in Sections 2 and 3.
We first show that the restriction to $\mathcal F_\l$ of the function
$\dd_\l \v_0$ is regular at the point \eqref{point}, see
Proposition~\ref{regular}. Then we show that
the corresponding value is non-zero, see Proposition~\ref{enonzero}.
An important role of the non-zero element $E_\l\in\h_n$
is explained in the following subsection.


\subsection{Intertwining operators}

For each index $k=1,2,\ldots,n-1$ denote
$$
\Phi_{k}=T_{k}+(1-q)/(1-X_{k}X_{k+1}^{-1}).
$$
Then $\Phi_1,\ldots,\Phi_{n-1}$ lie in the localization of the
algebra $\H_n$ relative to the set of denominators
$$
\{1-X_{k}X_{l}^{-1}\mid 1\le k,l\le n,\,k\neq l\,\}.
$$
The following relations imply, in particular,
that the Ore conditions are satisfied:
\begin{align}
\label{PX1}
&\Phi_kX_k=X_{k+1}\Phi_k,
&&
\\
\label{PX2}
&\Phi_kX_{k+1}=X_k\Phi_k,
&&
\\
\label{PX3}
&\Phi_kX_l=X_l\Phi_k,
&&
l\neq k,k+1;
\end{align}
see for instance \cite{L}.
By \eqref{H1} to \eqref{X3} we also have relations in the ring of fractions,
\begin{align}
\label{PP1}
&\Phi_k\Phi_{k+1}\Phi_k=\Phi_{k+1}\Phi_k\Phi_{k+1},
&&
1\le k\le n-2,
\\
\label{PP2}
&\Phi_k\Phi_l=\Phi_l\Phi_k,
&&1<|k-l|,
\\
\nonumber
&
\Phi_k^2=
(q-X_kX_{k+1}^{-1})(1-q\,X_kX_{k+1}^{-1})/(1-X_kX_{k+1}^{-1})^2,
&& 1\le k\le n-1.
\end{align}

By using any reduced recomposition $w=s_{k_1}\dots s_{k_m}$ 
in ${\mathcal S}_n$,
we can define an element $\Phi_w=\Phi_{k_1}\dots \Phi_{k_m}$
of the ring of fractions. Due to \eqref{PP1},\eqref{PP2} this element
does not depend on the choice of the reduced decomposition of $w$.
By \eqref{PX1} to \eqref{PX3},
for all $w\in {\mathcal S}_n$ and $k=1,\ldots,n$
we than have an equality
\begin{equation}
\label{PXk}
\Phi_wX_k=X_{w(k)}\Phi_w.
\end{equation}

The symmetric group ${\mathcal S}_n$ naturally acts on any character $\chi$
of the subalgebra ${\mathcal A}_n\subset\H_n$ so that
$$
(w\cdot\chi)(X_k)=\chi\,(X_{w^{-1}(k)}).
$$
Let $\pi_\chi:\H_n\to\operatorname{End}\,(\h_n)$ be the defining
homomorphism of the $\H_n$-module $I_\chi$.
Note that $\chi(X_{k})\neq0$ for any character $\chi$ and index $k$,
because $X_k^{-1}\in{\mathcal A}_n$.
The character $\chi$ is called {\it regular} if
$\chi(X_{k})\neq\chi(X_{l})$ for $k\neq l$.
For a regular character $\chi$, the action of the algebra
$\H_n$ on $I_\chi$ extends to each element $\Phi_w$
of the ring of fractions.
This extended action is also denoted by~$\pi_{\chi}$.

\begin{proposition}
\label{P1}
For any regular $\chi$,
the operator of
right multiplication in $\h_n$ by the element\/ $\pi_{\chi}(\Phi_w)(1)$
is an intertwining operator $I_{\,w\cdot\chi}\to I_\chi$
of\/ $\H_n\,$-modules.
\end{proposition}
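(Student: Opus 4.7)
The plan is to show that $F := \pi_\chi(\Phi_w)(1)$, viewed as an element of $I_\chi = \h_n$, is a simultaneous eigenvector for the action of $\mathcal A_n$ on $I_\chi$ with eigencharacter $w\cdot\chi$, and then to invoke the universal property of the induced module $I_{w\cdot\chi}$.

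First I would apply the intertwining identity \eqref{PXk}, namely $\Phi_w X_k = X_{w(k)}\Phi_w$ in the ring of fractions of $\H_n$. Regularity of $\chi$ ensures that $\pi_\chi(\Phi_w)$ is defined on $I_\chi$, so applying this identity to the cyclic vector $1\in I_\chi$ yields
\[
\pi_\chi(X_{w(k)})(F) \;=\; \pi_\chi(\Phi_w)\bigl(\pi_\chi(X_k)(1)\bigr) \;=\; \chi(X_k)\,F
\]
for every $k=1,\ldots,n$. Replacing $k$ by $w^{-1}(k)$ turns this into $\pi_\chi(X_k)(F) = \chi(X_{w^{-1}(k)})\,F = (w\cdot\chi)(X_k)\,F$, exhibiting $F$ as an $\mathcal A_n$-eigenvector with eigencharacter $w\cdot\chi$.

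Second, by the universal property of induction, the $\mathcal A_n$-equivariant map $\mathbb C(q)_{w\cdot\chi}\to I_\chi$ sending $1\mapsto F$ extends uniquely to an $\H_n$-linear map $\Psi\colon I_{w\cdot\chi}\to I_\chi$ with $\Psi(1)=F$. Under the standard identification of both $I_{w\cdot\chi}$ and $I_\chi$ with $\h_n$ as left $\h_n$-modules, an element $x\in\h_n\cong I_{w\cdot\chi}$ satisfies $\Psi(x)=\pi_\chi(x)(F)=xF$ computed in $\h_n$, which is exactly the operator of right multiplication by $F$ claimed in the proposition.

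The conceptual step is the passage from the abstract intertwining identity \eqref{PXk} for $\Phi_w$ in the ring of fractions to a concrete eigenvector statement for $F$ inside $I_\chi$; once this is in place, the rest is a formal consequence of Frobenius reciprocity. The only real technical prerequisite is that $\pi_\chi(\Phi_w)$ be a well-defined operator on $I_\chi$ — that is, that successive application of the factors $\Phi_{k_i}$ never meets a vanishing denominator — and this is precisely what the regularity hypothesis on $\chi$ guarantees, as already observed in the discussion preceding the proposition.
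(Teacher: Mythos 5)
Your proof is correct and follows essentially the same route as the paper: the heart of both arguments is applying the identity \eqref{PXk} to the cyclic vector $1\in I_\chi$ to show that $F=\pi_\chi(\Phi_w)(1)$ transforms under $\mathcal A_n$ by the character $w\cdot\chi$. The only difference is cosmetic — you package the final step as Frobenius reciprocity, whereas the paper directly checks that right multiplication by $F$ commutes with the left $\h_n$-action and, via cyclicity of $1$, with the $X_k$; these are the same verification.
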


\begin{proof}
Denote by $\mu$ this operator.
The action of the generators $T_{1},\ldots,T_{n-1}$
on the representation space $\h_n$ of $I_\chi$ and $I_{\,w\cdot\chi}$
is through left multiplication and commutes with the operator $\mu$.
It therefore remains for us to verify that the action of the elements
$X_1,\ldots,X_n$ commutes with $\mu$ as well.
Since the vector $1\in\h_n$ is cyclic for the actions
of the subalgebra $\h_n\subset\H_n$ on $I_\chi$ and $I_{\,w\cdot\chi}$,
it is sufficient to demonstrate that the composition
operators $\pi_{\chi}(X_{k})\,\mu$ and $\mu\,\pi_{w\cdot\chi}(X_{k})$
coincide on the identity vector for each $k=1,2,\ldots,n$.
But this follows from \eqref{PXk}:
\begin{align*}
\pi_{\chi}(X_{k})(\pi_{\chi}(\Phi_w)(1))
&=\pi_{\chi}(X_{k}\Phi_w)(1)
\\
&=\pi_{\chi}(\Phi_wX_{w^{-1}(k)})(1)
\\
&=\pi_{\chi}(\Phi_w)(\pi_{w\cdot\chi}(X_{k})(1)).
\qedhere
\end{align*}
\end{proof}

Let us now fix an element $w\in {\mathcal S}_n$ and a point
$(x_1,\ldots,x_n)\in\mathbb C(q)^{\times n}$
such that $x_k\neq x_l$ for $k\neq l$, and $x_k\neq0$ for all $k$.
A regular character $\chi$ can then be determined by setting
$\chi(X_k)=x_{w(k)}$ for $k=1,\ldots,n$.
We have $(w\cdot\chi)(X_k)=x_k$.

\begin{proposition}
\label{P2}
We have the equality\/ $\pi_{\chi}(\Phi_w)(1)=\v_w(x_1,\ldots,x_n)$ in $\h_n$.
\end{proposition}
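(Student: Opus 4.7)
I would prove this by induction on $m=\ell(w)$. The base case $m=0$ is immediate, since $\Phi_e=1$, $\v_e=1$, and $\pi_\chi(1)(1)=1$. For the inductive step I fix a reduced decomposition $w=s_{k_1}s_{k_2}\cdots s_{k_m}$ and set $v=s_{k_2}\cdots s_{k_m}$, so that $w=s_{k_1}v$ is reduced with $\ell(v)=m-1$ and $\Phi_w=\Phi_{k_1}\Phi_v$.

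The first key step is to rewrite the data $(w,\chi)$ so that the inductive hypothesis applies to $v$. Setting $x'_k=x_{s_{k_1}(k)}$ (that is, swapping the $k_1$-th and $(k_1{+}1)$-th entries of the sequence), one checks
$$
\chi(X_k)=x_{w(k)}=x_{s_{k_1}v(k)}=x'_{v(k)},
$$
so $\chi$ is the very character attached by the proposition to $v$ and to the sequence $(x'_1,\ldots,x'_n)$. The inductive hypothesis then supplies
$$
h:=\pi_\chi(\Phi_v)(1)=\v_v(x'_1,\ldots,x'_n)={}^{s_{k_1}}\v_v(x_1,\ldots,x_n)\in\h_n.
$$

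The second key step is to determine how the $X_k$'s act on $h$ inside $I_\chi$, for which I would invoke Proposition~\ref{P1}. Because the $x_i$ are distinct and nonzero, $\chi$ is regular, so right multiplication by $h$ is an intertwiner $I_{v\cdot\chi}\to I_\chi$. Evaluating the intertwining identity on $1\in I_{v\cdot\chi}$ yields, for every $k$,
$$
\pi_\chi(X_k)(h)=(v\cdot\chi)(X_k)\,h=\chi(X_{v^{-1}(k)})\,h=x_{s_{k_1}(k)}\,h.
$$
In particular $X_{k_1}$ and $X_{k_1+1}$ act on $h$ by the distinct scalars $x_{k_1+1}$ and $x_{k_1}$, whence $\langle X_{k_1}/X_{k_1+1}\rangle$ acts on $h$ as the scalar $\langle x_{k_1+1}/x_{k_1}\rangle$. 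Since $T_{k_1}$ acts on $I_\chi$ by left multiplication in $\h_n$,
$$
\pi_\chi(\Phi_{k_1})(h)=\bigl(T_{k_1}+\langle x_{k_1+1}/x_{k_1}\rangle\bigr)\cdot h=\v_{k_1}(x_1,\ldots,x_n)\cdot h,
$$
and therefore
$$
\pi_\chi(\Phi_w)(1)=\v_{k_1}(x_1,\ldots,x_n)\cdot{}^{s_{k_1}}\v_v(x_1,\ldots,x_n)=\v_w(x_1,\ldots,x_n),
$$
the last equality being the recursive definition~\eqref{phiw}.

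The only genuinely delicate point is the bookkeeping in the first step: one must recognize that the same character $\chi$ simultaneously corresponds to the pair $(w,(x_1,\ldots,x_n))$ and to the pair $(v,(x'_1,\ldots,x'_n))$, which is precisely what makes the inductive hypothesis applicable. Once this is set up, Proposition~\ref{P1} immediately supplies the eigenvalues of the $X_k$'s on $h$, and those eigenvalues are exactly what is needed to convert the rational factor in $\Phi_{k_1}$ into the scalar factor appearing in $\v_{k_1}$.
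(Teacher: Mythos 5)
Your proof is correct and follows essentially the same route as the paper's: induction on $\ell(w)$, peeling off the leftmost simple reflection, re-interpreting $\chi$ as the character attached to the shorter word and the swapped point $(x'_1,\ldots,x'_n)$, and then converting the rational factor of $\Phi_{k_1}$ into the scalar $\langle x_{k_1+1}/x_{k_1}\rangle$. The only cosmetic difference is that you extract the eigenvalues of the $X_k$ on $h$ by citing Proposition~\ref{P1}, whereas the paper applies the relation \eqref{PXk} directly, which is the same mechanism one step removed.
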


\begin{proof}
We will use the induction on the length $\ell(w)$ of the element
$w\in\mathcal S_n$. By definition, $\ell(w)$ is the number of
factors in any reduced decomposition of $w$. If $\ell(w)=0$
then $w$ is the identity element of $\mathcal S_n$, and
Proposition~\ref{P2} is trivial.

Now suppose that Proposition~\ref{P2} is valid for some element
$w\in\mathcal S_n$ and every regular character $\chi$.
Take any index $l\in\{1,\ldots,n-1\}$ such that
$\ell(s_lw)>\ell(w)$. Determine a character $\chi'$ of ${\mathcal A}_n$
by setting $\chi'(X_k)=x_{s_lw(k)}$ for $k=1,\ldots,n$.
This character is regular.
We shall make the induction step by showing that
$$
\pi_{\chi'}(\Phi_{s_lw})(1)=\v_{s_lw}(x_1,\ldots,x_n)\,.
$$

Let $(x'_1,\ldots,x'_n)\in\mathbb C(q)^{\times n}$ be the point
obtained from $(x_1,\ldots,x_n)$ by swapping the coordinates
$x_l$ and $x_{l+1}$. Then
$\chi'(X_k)=x'_{w(k)}$ for $k=1,\ldots,n$ so that
$$
\pi_{\chi'}(\Phi_w)(1)=\v_w(x'_1,\ldots,x'_n)=
{}^{s_l}\v_w(x_1,\ldots,x_n)
$$
by the induction assumption.
Further, by the definition of the character $\chi'$ we get 
$$
\chi'(X_{w^{-1}(l)})=x_{l+1}
\quad\text{and}\quad
\chi'(X_{w^{-1}(l+1)})=x_l\,.
$$
Hence by using \eqref{PXk}
\begin{align*}
\pi_{\chi'}(X_l\,\Phi_w)(1)&=x_{l+1}\,\pi_{\chi'}(\Phi_w)(1)\,,
\\
\pi_{\chi'}(X_{l+1}\,\Phi_w)(1)&=x_l\,\pi_{\chi'}(\Phi_w)(1)\,.
\end{align*}
Therefore
\begin{align*}
\pi_{\chi'}(\Phi_{s_lw})(1)
&=\pi_{\chi'}(\Phi_l\,\Phi_w)(1)
\\
&=\v_l(x_1,\ldots,x_n)\,\pi_{\chi'}(\Phi_w)(1)
\\
&=\v_l(x_1,\ldots,x_n)\,{}^{s_l}\v_w(x_1,\ldots,x_n)
\\
&=\v_{s_lw}(x_1,\ldots,x_n)\,.
\qedhere
\end{align*}
\end{proof}


\subsection{Cyclic generators for irreducible $\H_n$-modules}

For any $\l\in\mathcal C_n$
define a character $\chi_\l$ of ${\mathcal A}_n$
by setting 
\begin{equation}
\label{chi}
\chi_\l(X_k)=q^{c_k}\end{equation} 
for each $k=1,\ldots n$.
This character is regular, if and only if
no diagonal of the diagram $\l$ contains more than one box.
However,
using Propositions~\ref{P1}~and~\ref{P2} we obtain the following
corollary to Theorem \ref{T3}.

\begin{corollary}
\label{erole}
If $\l\in\mathcal C_n$
then the operator of right multiplication in $\h_n$ by
the element\/ $E_\l$
is an intertwining operator 
$I_{\chi_\l}\to I_{\,w_0\cdot\chi_\l}$
of\/ $\H_n\,$-modules.
\end{corollary}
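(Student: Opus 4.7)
The plan is to deduce the statement from Propositions~\ref{P1} and~\ref{P2} by a limit argument inside the affine subspace $\mathcal F_\l$. As a preliminary observation, suppose $(x_1,\ldots,x_n)\in\mathbb C(q)^{\times n}$ has pairwise distinct coordinates and define the character $\chi$ by $\chi(X_k)=x_{w_0(k)}$; then $\chi$ is regular, $(w_0\cdot\chi)(X_k)=x_k$, Proposition~\ref{P2} with $w=w_0$ gives $\pi_\chi(\Phi_{w_0})(1)=\v_0(x_1,\ldots,x_n)$, and Proposition~\ref{P1} yields that right multiplication by $\v_0(x_1,\ldots,x_n)$ in $\h_n$ intertwines $I_{w_0\cdot\chi}\to I_\chi$. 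If one could specialise $(x_1,\ldots,x_n)$ directly to $(q^{c_1},\ldots,q^{c_n})$, the two characters would become $w_0\cdot\chi_\l$ and $\chi_\l$, giving the corollary at once. The difficulty is that $\chi_\l$ may fail to be regular when two boxes of $\l$ share a content, and in that case the rational function $\v_0$ may have a pole at the special point.

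To bypass this, I would parametrise $\mathcal F_\l$ by the $r$ variables $y_i=x_{k_i}$, where $k_i$ is the leftmost entry of row $i$ in $\L$; the remaining coordinates are given by $x_k=y_i\,q^{c_k-(a_i-i)}$ when $k$ lies in row $i$, and the special point \eqref{point} corresponds to $y_i=q^{a_i-i}$. The locus $\mathcal F_\l^\circ\subset\mathcal F_\l$ on which the resulting $x_1,\ldots,x_n$ are pairwise distinct is Zariski-open, and it is nonempty because generic choices of the $y_i$'s suffice. On $\mathcal F_\l^\circ$ the preliminary observation applies pointwise, so right multiplication by $\v_0(x_1,\ldots,x_n)$ intertwines the induced modules attached to the regular characters $\chi$ and $w_0\cdot\chi$ for every $(x_1,\ldots,x_n)\in\mathcal F_\l^\circ$.

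Multiplying $\v_0$ by the scalar $\dd_\l(x_1,\ldots,x_n)$ preserves the intertwining identity, and by Theorem~\ref{T3} the restriction of $\dd_\l\,\v_0$ to $\mathcal F_\l$ is regular at the special point \eqref{point} with value $E_\l$. Meanwhile, $\pi_\chi(X_k)$ depends polynomially on $(x_1,\ldots,x_n)$ along $\mathcal F_\l$, so both sides of the intertwining identity are rational functions on $\mathcal F_\l$ with values in $\operatorname{End}(\h_n)$ that are regular at the special point. Hence the identity extends from $\mathcal F_\l^\circ$ to \eqref{point}; there the characters $\chi$ and $w_0\cdot\chi$ specialise to $w_0\cdot\chi_\l$ and $\chi_\l$ respectively, giving the intertwining property for right multiplication by $E_\l$ from $I_{\chi_\l}$ to $I_{w_0\cdot\chi_\l}$. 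The substantive input is Theorem~\ref{T3}, which supplies the limit $E_\l$; once that is in hand, the corollary is essentially a Zariski-continuity argument on top of Propositions~\ref{P1} and~\ref{P2}.
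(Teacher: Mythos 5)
Your argument is correct and is essentially the paper's own proof: pick a point of $\mathcal F_\l$ with distinct nonzero coordinates, apply Propositions~\ref{P1} and~\ref{P2} with $w=w_0$ to get the intertwining property for right multiplication by $\v_0(x_1,\ldots,x_n)$, multiply by the scalar $\dd_\l$, and specialise to the point \eqref{point} using Theorem~\ref{T3}. The only difference is that you make explicit the Zariski-density and regularity considerations that justify passing to the limit, which the paper leaves implicit.
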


\begin{proof}
Take any point $(x_1,\ldots,x_n)\in\mathcal F_\l$
such that $x_k\neq x_l$ for $k\neq l$, and $x_k\neq0$ for all $k$.
A regular character $\chi$ can then be determined by setting
\begin{equation}
\label{gpoint}
(w_0\cdot\chi)(X_k)=x_k
\end{equation}
for each $k=1,\ldots n$.
By choosing $w=w_0$ 
in Propositions \ref{P1} and \ref{P2}, we obtain that 
the operator of right multiplication in $\h_n$ by 
$\v_0(x_1,\ldots,x_n)$
is an intertwining operator $I_{\,w_0\cdot\chi}\to I_{\chi}$.
So is the operator of right multiplication by the product
$$
\delta_\l(x_1,\ldots,x_n)\,\v_0(x_1,\ldots,x_n).
$$
On the other hand, at the point \eqref{point}
the character $w_0\cdot\chi$ defined by the equalities
\eqref{gpoint} specializes to $\chi_\l\,$, see \eqref{chi}.
The character $\chi$ then specializes to $w_0\cdot\chi_\l\,$.
We now get Corollary~\ref{erole} by the definition
\eqref{E} of the element $E_\l\,$.
\end{proof}

Consider the left ideal in $\h_n$ generated by the element $E_\l$.
Corollary~\ref{erole} shows that this left ideal is a submodule of
the induced $\H_n$-module $I_{\,w_0\cdot\chi_\l}$. Let us denote
by $V_\l$ this submodule. Note that $V_\l\neq\{0\}$ because
$E_\l\neq0$. The following theorem has been stated in \cite{C2}
without proof.

\begin{theorem}
\label{main}
The\/ $\H_n$-module $V_\l$ is irreducible.
\end{theorem}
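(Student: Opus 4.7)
My plan follows the indirect route indicated in the introduction: prove the theorem by identifying $V_\l$ with a module whose irreducibility is known from the quantum affine algebra side, via Cherednik's $q\,$-analogue of the Drinfeld functor.

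First I would describe $V_\l$ intrinsically. By Corollary~\ref{erole}, right multiplication by $E_\l$ defines a nonzero intertwiner $I_{\chi_\l}\to I_{\,w_0\cdot\chi_\l}$, so $V_\l$ is its image and hence the cyclic $\H_n\,$-submodule of $I_{\,w_0\cdot\chi_\l}$ generated by $E_\l$. Via the bijection $\mathcal M_n\to\mathcal C_n$ from Section~\ref{1.2}, the diagram $\l$ corresponds to a Zelevinsky multisegment $M$, and the row ordering of $\l$ matches the reverse lexicographic ordering of the segments of $M$. With this ordering, the induced module built from the tensor product~\eqref{wm} has, by Zelevinsky's theorem, a unique nonzero irreducible submodule $L(M)$. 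The goal is then to show that $L(M)$ is in fact irreducible as a whole (not merely that such a submodule exists) and to identify it with $V_\l$.

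To obtain irreducibility of $L(M)$, I would apply the $q\,$-Drinfeld functor of Cherednik \cite{Ch} to transport the question into the category of finite-dimensional modules over a quantum affine algebra $U_q(\widehat{\mathfrak{sl}}_N)$ with $N$ sufficiently large. Under this functor each segment module $V_{[i,j]}$ transports to a Kirillov--Reshetikhin-type evaluation module, and the induced module from~\eqref{wm} to the corresponding ordered tensor product. Akasaka and Kashiwara \cite{AK} prove that, precisely in this ordering, the tensor product is cyclic with a unique irreducible head and describe its structure explicitly. Exactness and faithfulness properties of the Drinfeld functor on the relevant subcategory then translate this back into the irreducibility of $L(M)$ as an $\H_n\,$-module (and, as a byproduct, the pairwise non-equivalence of $L(M)$ for distinct $M$).

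Finally one must match $V_\l$ with $L(M)$. Since $V_\l\neq 0$ by Theorem~\ref{T3} and both modules carry the same $\mathcal A_n\,$-central character determined by the content sequence $(c_1,\ldots,c_n)$, it is enough to embed $V_\l$ into the induced module from~\eqref{wm} in such a way that $E_\l$ lands inside $L(M)$; the uniqueness of $L(M)$ then forces $V_\l\subseteq L(M)$, and irreducibility of $L(M)$ yields equality. I expect this matching step to be the main obstacle: it requires a concrete comparison of the two realizations of the induced module, together with careful control of the leading $T_w\,$-expansion of $E_\l$ in the style of~\eqref{lead0} and its analogue for general Cherednik diagrams, in order to recognize $E_\l$ as a generator of the Zelevinsky submodule rather than of some larger subquotient.
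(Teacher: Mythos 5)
Your high-level strategy --- transport the problem to finite-dimensional modules over a quantum affine algebra via Cherednik's $q$-analogue of the Drinfeld functor and invoke Akasaka--Kashiwara \cite{AK} --- is indeed the one the paper uses. But your concrete route runs through Zelevinsky's classification, and the step you yourself flag as ``the main obstacle'' is a genuine gap, not a deferred technicality. To conclude, you must show that $E_\l$, viewed inside the module induced from \eqref{wm}, actually lies in the unique irreducible submodule $L(M)$. Nothing in your sketch does this: the central character cannot distinguish $L(M)$ from the other composition factors of the induced module (all subquotients of an induced module share the same central character), and $I_{\,w_0\cdot\chi_\l}$ is induced from a character of $\mathcal A_n$ rather than from the tensor product \eqref{wm}, so even the comparison map between the two induced modules has to be constructed and shown to carry $E_\l$ into $L(M)$. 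Controlling the leading $T_w$-coefficient of $E_\l$ as in \eqref{lead0} or \eqref{desired} only gives $E_\l\neq 0$; it says nothing about which composition factor $E_\l$ generates. As written, the proposal reduces the theorem to an unproved claim that is essentially equivalent to it.

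The paper's proof avoids this identification entirely: it never mentions $L(M)$. Instead it first passes from $\l$ to a reflected diagram $\bar\l$ and from $E_\l$ to $\o_n(E_\l)$, where $\o_n(T_k)=T_{n-k}$ and $\o_n(X_k)=X_{n-k+1}^{-1}$, showing the irreducibility of $V_{\bar\l}$ is equivalent to that of $V_\l$; it then twists by the involution $q\mapsto q^{-1}$, $T_k\mapsto -q^{-1}T_k$, $X_k\mapsto X_k$ to obtain a module $V_\l'$ in the normalization to which the Chari--Pressley functor $\mathcal{J}$ \cite{CP} and the results of \cite{AK} apply directly. Since for $N>n$ the functor $\mathcal{J}$ kills no nonzero finite-dimensional $\H_n$-module, a proper nonzero submodule $W\subset V_\l'$ would produce a proper nonzero submodule $\mathcal{J}(W)\subset\mathcal{J}(V_\l')$, contradicting the irreducibility of $\mathcal{J}(V_\l')$ supplied by \cite{AK}. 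In other words, the functor is applied to the cyclic module $V_\l'$ itself, not to an induced module whose socle must then be located. If you wish to keep your route through $L(M)$, the missing ingredient is precisely a proof that $V_\l$ sits in the socle of the Zelevinsky induced module, and that is the hard content you have not supplied.
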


\begin{proof}
Using the notation of Subsection \ref{1.2}, write
$\l=(\l_1,\ldots,\l_r)$ where $\l_i=[a_i,b_i]$ for each
$i=1,\ldots,r$. Then consider another diagram
$\bar\l=(\bar\l_1,\ldots,\bar\l_r)$
with the same number $r$ of rows, such that for each $i=1,\ldots,r$
$$
\bar\l_i=[\,r-b_{r-i+1}+1,r-a_{r-i+1}+1\,]\,.
$$
Then $\bar\l$ has degree $n$, but is not
necessarily a Cherednik diagram.
If $\bar c_1,\ldots,\bar c_n$ are the contents corresponding to
$\bar\l$ then $\bar c_k=-c_{n-k+1}$ for $k=1,\ldots,n$.

We claim that an analogue of Theorem \ref{T3}
holds for the diagram $\bar\l$ instead of $\l$.
Namely, the restriction of the rational function
$\delta_{\,\bar\l}\v_0$ to the
subspace $\mathcal F_{\bar\l}$ is regular and non-zero at the point
\begin{equation}
\label{barpoint}
(x_1,\ldots,x_n)=(q^{\bar c_1},\ldots,q^{\bar c_n})\,.
\end{equation}
Further,
let $\o_n$ be the involutive automorphism of the $\mathbb C(q)$-algebra
$\h_n$ defined by setting $\o_n(T_k)=T_{n-k}$ for each $k=1,\ldots,n-1$.
We also claim that the value 
of that restriction at the point \eqref{barpoint} equals $\o_n(E_\l)$.

To verify these two claims, consider the transformation of
$(\mathbb C(q)\setminus\{0\})^{\times n}$,
$$
(x_1,\ldots,x_n)\mapsto(x_n^{-1},\ldots,x_1^{-1})\,.
$$
This transformation maps the point \eqref{point} to the point
\eqref{barpoint}, and also maps $\mathcal F_\l$ to $\mathcal F_{\bar\l}$.
But by using Lemma \ref{L2}, we get the relation
$$
\v_0(x_n^{-1},\ldots,x_1^{-1})=\o_n(\v_0(x_1,\ldots,x_n)).
$$
Moreover, by the definition of function $\delta_{\bar\l}$ we have
the relation
$$
\delta_{\,\bar\l\,}(x_n^{-1},\ldots,x_1^{-1})=
\delta_\l(x_1,\ldots,x_n).
$$
Our two claims now follow from Theorem \ref{T3} and from the definition
of $E_\l$.

Further, we have an analogue of Corollary \ref{erole} for the
diagram $\bar\l$ instead of $\l$. Consider the character
$\chi_{\bar\l}$ of ${\mathcal A}_n$ corresponding to
$\bar\l$. The operator of right multiplication in $\h_n$ by
the element $\o_n(E_\l)$ is then an
intertwiner $I_{\chi_{\bar\l}}\to I_{\,w_0\cdot\chi_{\bar\l}}$.
Let $V_{\,\bar\l}$ be the left ideal in $\h_n$ generated by the element
$\o_n(E_\l)$. This left ideal is a submodule of
the induced $\H_n$-module $I_{\,w_0\cdot\chi_{\bar\l}}$.
We claim that the irreducibility of the $\H_n$-module
$V_{\,\bar\l}$ is equivalent to that of $V_\l$.

Indeed, the automorphism $\o_n$ of $\h_n$ extends
to an involutive automorphism of the $\mathbb C(q)$-algebra
$\H_n$ by setting $\o_n(X_k)=X_{n-k+1}^{-1}$ for each
index $k=1,\ldots,n$. Consider the $\H_n$-module
$I_{\,w_0\cdot\chi_\l}^{\,\o_n}$ obtained from
$I_{\,w_0\cdot\chi_\l}$ by twisting the latter module with
the automorphism $\o_n$ of $\H_n$. The twisted $\H_n$-module
is equivalent to $I_{\,w_0\cdot\chi_{\bar\l}}$:
the underlying vector space of the two modules is $\h_n$, and
the equivalence map
\begin{equation}
\label{emap}
I_{\,w_0\cdot\chi_\l}^{\,\o_n}
\to
I_{\,w_0\cdot\chi_{\bar\l}}
\end{equation}
can be chosen as $\,\o_n:\h_n\to\h_n$. Here we also use the
equalities for $k=1,\ldots,n$
$$
(w_0\cdot\chi_\l)(\o_n(X_k))
=q^{-c_k}=
(w_0\cdot\chi_{\bar\l})(X_k).
$$
The image of the submodule
$V_\l^{\,\o_n}\subset I_{\,w_0\cdot\chi_\l}^{\,\o_n}$
under the map \eqref{emap} is $V_{\bar\l}$. Therefore
the irreducibility of
$V_{\,\bar\l}$ is equivalent to that of $V_\l^{\,\o_n}$,
and hence to that of $V_\l$.

There is
an involutive automorphism of $\H_n$ as $\,\mathbb C$-algebra,
defined by mapping
$$
q\mapsto q^{-1},\quad
T_k\mapsto-\,q^{-1}\,T_k,\quad
X_k\mapsto X_k
$$
for all possible indices $k$. Denote by $V_\l'$ the $\H_n$-module
obtained by twisting $V_{\bar\l}$ with this automorphism.
We shall establish the irreducibility of $V_\l'$
under the conditions of Definition \ref{CD}
on the diagram $\l$. The irreducibility
of $V_\l$ will then follow. We will use the representation theory of the
quantum enveloping algebra $U_\upsilon(\widehat{\mathfrak{sl}}_N)$
of the Kac-Moody Lie algebra $\widehat{\mathfrak{sl}}_N$,
with the parameter $\upsilon=q^{\,1/2}$.

A link between the representation theories of the affine Hecke algebras
and quantum affine algebras was discovered by Drinfeld \cite{D}.
For the algebras $\H_n$ and $U_q(\widehat{\mathfrak{sl}}_N)$
this link was established by Cherednik \cite{C2,Ch}.
We will employ a version of this link due to Chari and Pressley
\cite{CP}. However unlike in \cite{CP}, here $q$
is a formal parameter, not  a complex number.
Hence we regard $U_\upsilon(\widehat{\mathfrak{sl}}_N)$ as
a $\mathbb C(\upsilon)$-algebra.

There is a functor $\mathcal{J}$
from the category of all finite-dimensional
$\H_n$-modules to the category of finite-dimensional
$U_\upsilon(\widehat{\mathfrak{sl}}_N)$-modules
\cite[Theorem 4.2]{CP}. If $N>n$, then the
$U_\upsilon(\widehat{\mathfrak{sl}}_N)$-module
$\mathcal{J}(V)$ is non-zero for each non-zero $\H_n$-module $V$.
But under the conditions on the diagram $\bar\l$ implied by
Definition \ref{CD},
the $U_\upsilon(\widehat{\mathfrak{sl}}_N)$-module
$\mathcal{J}(V_\l')$ is irreducible 
\cite[Corollary~2.3, Proposition~3.5 and Theorem~4.1]{AK}.
Hence $V_\l$ is also irreducible. Note that the irreducibility of
the $U_\upsilon(\widehat{\mathfrak{sl}}_N)$-module $\mathcal{J}(V_\l')$
can also be derived from \cite[Proposition 3.1]{NT}.
\end{proof}


\section*{2.\ Beginning of the proof of  Theorem \ref{T3}}

\setcounter{section}{2}
\setcounter{subsection}{0}
\setcounter{equation}{0}
\setcounter{theorem}{0}

\subsection{Combinatorial preliminaries}
\label{comb}
Let $u_1,\ldots,u_n$ be the standard basis in the Euclidean vector
space $\mathbb R^n$. Take the root system in $\mathbb R^n$ 
of type $A_{n-1}$. A choice of the set $\Dp$
positive roots is made as
$$
\Dp=\{u_i-u_j\mid\ 1\le i<j\le n\}.
$$
With this choice, the simple roots are $\a_i=u_i-u_{i+1}$
for $i=1,\ldots,n-1$.
We will be interested into certain subsets of  $\Dp$ and 
certain  total orders on them. They have been studied independently 
in \cite{Dyer,Khor,P} where they appear under different names:
total reflection orders, normal orders, compatible orders respectively.

\vbox{
\begin{definition}
\label{convex} 
{\bf (a)} 
A subset $\mathcal{L}\subset \Dp$ is called biconvex  if both 
$\mathcal{L}$ and $\Dp\setminus\mathcal{L}$ are closed under root addition.
\par\noindent
{\bf (b)} 
A total order $<$ on a biconvex set $\mathcal{L}$ 
is said to be a convex order if it satisfies the following conditions:
\begin{enumerate}
\item[(i)] 
if $\a,\b\in\mathcal{L}$, $\a+\b\in \Dp$ and $\a<\b$ , then  $\a<\a+\b<\b$;
\item[(ii)] 
if $\a+\b\in\mathcal{L}$ and $\a\notin\mathcal{L}$, then $\b<\a+\b$.
\end{enumerate}
\end{definition}
}

Note that $\Dp$ is in canonical bjiection with the set
$\{(i,j)\mid 1\le i<j\le n\}$ by $u_i-u_j\mapsto (i,j)$.
We shall tacitly use this identification in the following, 
speaking of {\it pairs} rather than of roots.  We say that 
two pairs $\a,\beta$ are orthogonal
if the corresponding roots are orthogonal in $\mathbb{R}^n$.
Then we write $\a\perp \beta$.

We will be interested into two particular convex orderings.
The lexicographic order on $\Dp$ will be denoted  by $<_1\,$:
\begin{equation}
\label{o1}
(i,j)<_1(k,l)
\ \ \text{if and only if}\ \  
i<k\ \text{or}\ i=k\ \text{and}\ j<l.
\end{equation}
We will also use another useful order which will be denoted by $<_2\,$:
\begin{equation}
\label{o2}
(i,j)<_2(k,l)
\ \ \text{if and only if}\ \   
j<l\ \text{or}\ j=l\ \text{and}\ i<k.
\end{equation}

The symmetric group $\mathcal{S}_n$ acts on the root system,
via permutations of the basis vectors $u_1,\ldots,u_n\in\mathbb{R}^n$.
For any $w\in\mathcal{S}_n$ take the set
$$
\mathcal{I}_{\,w}=\{\a\in \Dp\mid w^{-1}(\a)\notin\Dp\}.
$$
This is the set of  \textit{inversions} for $w^{-1}$.
It is well known that, if
$w=\smm$ is a reduced decomposition, then
$\mathcal{I}_{\,w}=\{\b_1,\ldots,\b_m\}$ where 
$\b_k=s_{i_1}\dots s_{i_{k-1}}(\a_{i_k})$
for $k=1,\ldots m$. Here $m=\ell(w)$. The choice of the
reduced decomposition for $w$ provides a total ordering of $\mathcal{I}_{\,w}$:
here $\b_i<\b_k$ if and only if $i<k$. Furthermore, 
a subset $\mathcal{L}\subset \Dp$ is biconvex if and only if it is of the
form $\mathcal{I}_{\,w}$ for some $w\in \mathcal{S}_n$, see \cite{P}.
The convex orders on $\mathcal{I}_{\,w}$ are exactly the ones provided
by the reduced decompositions of $w$.
The order $<_1$ on $\Dp$
is provided by the decomposition
\begin{equation}
\label{w0_1}
w_0=(s_1\dots s_{n-1})\ldots(s_1\,s_2)\,s_1\,,
\end{equation}
while the order $<_2$ on $\Dp$ is provided by the decomposition
\begin{equation*}
w_0=s_1\,(s_2\,s_1)\ldots (s_{n-1}\ldots s_1)\,.
\end{equation*}

Finally, we recall the following technical result
\cite[Proposition 1.9]{GP}.
Let $<$ be any total order on $\Dp$.
For any $\a\in\Dp$ define the set $\a^\le$ 
as $\{\b\in \Dp\mid \b\le \a\}$. Then 
define the set $\a^\ge$ in the obvious way.

\begin{lemma}
\label{modifiyorder}
Suppose that in a certain convex order\/ $<'$ on\/ $\Dp$  
the pair $(i,j)$ is covered  by $(i+1,j)$ or
covers $(i,j-1)$. Then there exists a convex order\/ $<$ 
on\/ $\Dp$ in which $(i,j)$ covers $(i,i+1)$ or is 
covered by $(j-1,j)$ respectively, 
and which restricts to the order\/ $<'$ on 
$(i,j)^\ge$ or on $(i,j)^\le$ respectively.
\end{lemma}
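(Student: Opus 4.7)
The plan is to exploit the bijection between convex orders on $\Dp$ and reduced decompositions of $w_0$ recalled above. Write $<'$ as coming from $w_0=s_{i_1}\dots s_{i_N}$, with $(i,j)=\b_k$ at position $k$, and set $w=s_{i_1}\dots s_{i_k}$ and $v=s_{i_{k+1}}\dots s_{i_N}$. Then $\mathcal{I}_w=(i,j)^{\le}$ and $w(\mathcal{I}_v)=(i,j)^{\ge}\setminus\{(i,j)\}$, so I aim to modify only $w$ in Case 1, or only $v$ in Case 2, leaving the other half intact so that the required agreement with $<'$ holds automatically.

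\smallskip\noindent\emph{Case 1.}
From $\b_k=(i,j)$ one computes $w(i_k)=j$, $w(i_k+1)=i$; from $\b_{k+1}=(i+1,j)$, $w(i_{k+1})=i+1$ and $w(i_{k+1}+1)=j$. Since $j$ appears exactly once in $w$'s one-line notation, $i_{k+1}+1=i_k$, and $w$ reads $\dots,i+1,j,i,\dots$ at consecutive positions $i_k-1,i_k,i_k+1$. Hence $(i_k,i_k+1)$ is a descent of $w$ and $(i_k-1,i_k)$ is a descent of $ws_{i_k}$, so $w'=ws_{i_k}s_{i_k-1}$ has length $k-2$ and $w=w'\cdot s_{i_k-1}\cdot s_{i_k}$ is reduced. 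Prepending any reduced word for $w'$ to this tail and to the unchanged suffix $s_{i_{k+1}}\dots s_{i_N}$ gives a reduced decomposition of $w_0$. A direct one-line-notation check shows the last two roots contributed by the modified prefix are $(i,i+1)$ and $(i,j)$; so in the induced order $<$ the pair $(i,i+1)$ immediately precedes $(i,j)$, while the unchanged suffix forces $<$ to agree with $<'$ on $(i,j)^{\ge}$.

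\smallskip\noindent\emph{Case 2.}
An analogous one-line-notation analysis from $\b_{k-1}=(i,j-1)$ and $\b_k=(i,j)$ shows $w$ reads $\dots,j-1,j,i,\dots$ at positions $i_{k-1},i_{k-1}+1,i_{k-1}+2$; in particular $w(\a_{i_{k-1}})=(j-1,j)$, and $w^{-1}(j-1)<w^{-1}(j)$ gives $(j-1,j)\notin\mathcal{I}_w$. The disjoint union $\Dp=\mathcal{I}_w\sqcup w(\mathcal{I}_v)$, valid because $w_0=wv$ is reduced, then forces $\a_{i_{k-1}}\in\mathcal{I}_v$; equivalently, $s_{i_{k-1}}$ is a left descent of $v$, so $v=s_{i_{k-1}}v'$ reduced. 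Keeping the decomposition of $w$ fixed and using $s_{i_{k-1}}v'$ for the suffix yields a reduced decomposition of $w_0$ whose induced convex order $<$ retains $\b_1,\dots,\b_k$ in positions $1,\dots,k$ and places $(j-1,j)$ at position $k+1$, as required.

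The main obstacle is the combinatorial bookkeeping: matching the local covering hypothesis on $<'$ to a specific three-position pattern in the one-line notation of $w$, then in Case 1 extracting two successive right descents of $w$ to reduce the prefix, and in Case 2 invoking the formula $\Dp=\mathcal{I}_w\sqcup w(\mathcal{I}_v)$ to convert this pattern into a left descent of $v$. Checking that each modified word is reduced of the correct length is then routine.
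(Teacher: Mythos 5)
Your argument is correct. Note, however, that the paper does not prove this lemma at all: it is quoted verbatim as a known technical result, with a citation to \cite[Proposition 1.9]{GP}, so there is no in-paper proof to compare against. Your proof is a clean, self-contained derivation using exactly the machinery the paper sets up in Subsection 2.1 (the bijection between convex orders and reduced decompositions of $w_0$, the formula $\b_k=s_{i_1}\cdots s_{i_{k-1}}(\a_{i_k})$, and the factorization $\mathcal I_{wv}=\mathcal I_w\sqcup w(\mathcal I_v)$ for length-additive products). I checked the key computations: in Case 1 the hypotheses force $i_{k+1}=i_k-1$ and the one-line pattern $\ldots,i{+}1,j,i,\ldots$ of $w$ at positions $i_k-1,i_k,i_k+1$, whence the two right descents, the reduced rewriting $w=w's_{i_k-1}s_{i_k}$, and the last two prefix roots $(i,i{+}1),(i,j)$; in Case 2 the pattern $\ldots,j{-}1,j,i,\ldots$ gives $w(\a_{i_{k-1}})=(j{-}1,j)\notin\mathcal I_w$, so the disjoint-union formula forces $\a_{i_{k-1}}\in\mathcal I_v$ and the new suffix $s_{i_{k-1}}v'$ places $(j{-}1,j)$ at position $k+1$. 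In both cases the unmodified half of the word fixes the order on $(i,j)^{\ge}$ or $(i,j)^{\le}$ as required, and convexity of the new order is automatic since it comes from a reduced decomposition of $w_0$. The only implicit point worth making explicit is that $k\ge2$ in Case 1 (so that $\ell(w')=k-2\ge0$); this holds because the hypothesis requires $i+1<j$, so $(i,j)$ is not simple and cannot be $\b_1$. Your proof would serve as a legitimate replacement for the external citation.
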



\subsection{Yang-Baxter relations}
\label{subsection2}
With a slight abuse of  notation, set
\begin{equation}
\label{bsing}
\langle\beta\rangle = \langle\,{x_j}/{x_i}\,\rangle
\end{equation}
for each $\b=(i,j)\in \Dp$,
see Subsection \ref{thefun}.
Then our basic function \eqref{basic} 
can be written as $\v_k=T_k+\langle\a_k\rangle$. If
$w=s_{i_1}\ldots s_{i_m}$ is a reduced decomposition, then
\begin{align*}
\v_w&=(\v_{i_1})
(\phantom{}^{s_{i_1}}\v_{i_2})
(\phantom{}^{s_{i_1}s_{i_2}}\v_{i_3})
\ldots
(\phantom{}^{s_{i_1}\dots s_{i_{m-1}}}\v_{i_m})
\\
&=
(T_{i_1}+\langle\b_1\rangle)\,
(T_{i_2}+\langle\b_2\rangle)\,
(T_{i_3}+\langle\b_3\rangle)\,\dots\,
(T_{i_n}+\langle\b_n\rangle)
\end{align*}
where as above $\b_k=s_{i_1}\ldots  s_{i_{k-1}}(\a_{i_k})$
for each $k=1,\ldots,m$.
Hence, if we denote
$$
\v^l_{\b}=T_l+\langle\b\rangle
$$ 
then we get
\begin{equation}\label{phir}
\v_{w}
=\v^{i_1}_{\b_1}
\v^{i_2}_{\b_2}
\,\ldots\,
\v^{i_m}_{\b_m}
.
\end{equation}
To simplify our notation, 
we will write $\v_{\b_j}$ for $\v^{i_j}_{\b_j}$.  
Also, if $\b=(i,j)\in \Dp$ we will sometimes write $\v_\b=\v_{ij}$. 
A direct consequence of Lemma \ref{L2} is the following

\begin{lemma}
\label{braid}
The function $\v_{w}$ is invariant  under the
following moves of adjacent factors in the product \eqref{phir}:
\begin{align}
\label{perp}
&\v_\a\v_\b=\v_\b\v_\a 
&&
\text{if}\ \ \a\perp \b\,;
\\
\label{YB}
&\v_{\a}\v_{\a+\b}\v_{\b}=\v_{\b}\v_{\a+\b}\v_{\a}
&&
\text{if}\ \ \a+\b\in\Dp.
\end{align}
\end{lemma}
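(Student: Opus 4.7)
The plan is to rewrite each factor in \eqref{phir} as $\v_{\b_k} = T_{i_k} + \langle\b_k\rangle$ and apply either the Hecke commutation relation \eqref{H2} or Lemma \ref{L2} directly to two or three consecutive factors. The two moves \eqref{perp} and \eqref{YB} correspond, respectively, to the two kinds of elementary Matsumoto moves in a reduced decomposition of $w$, reinterpreted in terms of inversions.

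For \eqref{perp}, I would first show that $\b_k\perp\b_{k+1}$ forces $|i_k - i_{k+1}| > 1$. Since the isometry $(s_{i_1}\cdots s_{i_{k-1}})^{-1}$ sends $\b_k\mapsto\a_{i_k}$ and $\b_{k+1}\mapsto s_{i_k}(\a_{i_{k+1}})$, orthogonality of the $\b$'s means $s_{i_k}(\a_{i_{k+1}})\perp\a_{i_k}$; this rules out $s_{i_k}(\a_{i_{k+1}})=\a_{i_k}+\a_{i_{k+1}}$ and forces $s_{i_k}(\a_{i_{k+1}})=\a_{i_{k+1}}$, i.e., $|i_k-i_{k+1}|>1$. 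Given this, relation \eqref{H2} gives $T_{i_k}T_{i_{k+1}} = T_{i_{k+1}}T_{i_k}$; since the scalar factors $\langle\b\rangle$ commute with every element of $\h_n$, expanding the product yields $\v_{\b_k}\v_{\b_{k+1}} = \v_{\b_{k+1}}\v_{\b_k}$.

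For \eqref{YB}, the key observation is that three consecutive inversions with $\b_{k+1}=\b_k+\b_{k+2}\in\Dp$ force the indices $i_k, i_{k+1}, i_{k+2}$ to satisfy $i_k = i_{k+2}$ and $|i_k - i_{k+1}| = 1$ (again by pulling back to simple roots via the same isometry). Writing $\b_k=(i,j)$ and $\b_{k+2}=(j,l)$ with $i<j<l$, so that $\b_{k+1}=(i,l)$, the arguments of $\langle\cdot\rangle$ become $x = x_j/x_i$, $y = x_l/x_i$, $z = x_l/x_j$, and the crucial condition $y=xz$ holds automatically. Lemma \ref{L2} then applies with $x'=x$, $y'=y$, $z'=z$, giving $\v_{\b_k}\v_{\b_{k+1}}\v_{\b_{k+2}} = \v_{\b_{k+2}}\v_{\b_{k+1}}\v_{\b_k}$, which is exactly \eqref{YB}.

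The main obstacle is purely bookkeeping: tracking the superscript index $i_k$ on each $\v^{i_k}_{\b_k}$ through the move and verifying that orthogonality of roots corresponds to the commuting-generators condition of \eqref{H2}, while $\b_k+\b_{k+2}\in\Dp$ for consecutive inversions corresponds to the braid pattern of Lemma \ref{L2}. Both verifications are brief and rest on the standard correspondence between Matsumoto moves on reduced decompositions and the induced convex orderings of the inversion set discussed in Subsection \ref{comb}.
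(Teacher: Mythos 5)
Your argument is correct and is exactly the route the paper intends: the paper dispatches this lemma with the single remark that it is ``a direct consequence of Lemma~\ref{L2}'', and your write-up simply makes that explicit, using \eqref{H2} for the orthogonal move \eqref{perp} and Lemma~\ref{L2} with $y=y'=xz$ for the braid move \eqref{YB}, after checking via the isometry $(s_{i_1}\cdots s_{i_{k-1}})^{-1}$ that the superscripts fall into the required commuting or braid pattern. The verification that $y=xz$ holds automatically for $\b_{k+1}=\b_k+\b_{k+2}$ is precisely the point that makes the lemma work, and you have it.
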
 

Another easy observation which will be used later is the following

\begin{lemma}
\label{almostbraid}
If $\a,\b\in\Dp$ are adjacent in a convex order then
$\a\perp\b$ implies that $\v_\a=\v_\a^k\,,\,\v_\b=\v_\b^l$ 
for some $k,l$ with
$|k-l|>1$. If  $\a,\a+\b,\b\in\Dp$  
are adjacent in a convex order
then  $\v_\a=\v_\a^k\,,\,\v_{\a+\b}=\v_{\a+\b}^{k\pm 1}\,,\,\v_\b=\v_\b^k$
for some $k$.
\end{lemma}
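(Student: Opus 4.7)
The plan is to translate the combinatorial hypotheses into constraints on the indices of consecutive simple reflections in a reduced decomposition, and then reduce everything to an inner-product computation in the root system. By the correspondence recalled in Subsection 2.1 (citing \cite{P}), any convex order arises as $\b_1<\cdots<\b_m$ from a reduced decomposition $w=s_{i_1}\cdots s_{i_m}$ with $\b_p=v_p(\a_{i_p})$, $v_p=s_{i_1}\cdots s_{i_{p-1}}$. Thus two adjacent roots $\a<\b$ at positions $p,p+1$ satisfy $\a=v_p(\a_{i_p})$ and $\b=v_ps_{i_p}(\a_{i_{p+1}})$, and $i_p\neq i_{p+1}$ because the decomposition is reduced. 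Since by definition $\v_\a=\v_\a^{i_p}$ and $\v_\b=\v_\b^{i_{p+1}}$, the task reduces to constraining the indices $i_p,i_{p+1}$ (and $i_{p+2}$ in the second case).

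For the first assertion I would use orthogonality of $v_p$ together with $s_{i_p}(\a_{i_p})=-\a_{i_p}$ to compute
\[
(\a,\b)=(\a_{i_p},s_{i_p}(\a_{i_{p+1}}))=(s_{i_p}(\a_{i_p}),\a_{i_{p+1}})=-(\a_{i_p},\a_{i_{p+1}}).
\]
In type $A_{n-1}$ two simple roots are orthogonal exactly when their indices differ by more than $1$. Hence $\a\perp\b$ forces $|i_p-i_{p+1}|>1$, which is the first assertion with $k=i_p$, $l=i_{p+1}$.

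For the second assertion, set $\a=\b_p$, $\a+\b=\b_{p+1}$, $\b=\b_{p+2}$. The identity $\b_{p+1}=\b_p+\b_{p+2}$ combined with $(\gamma,\gamma)=2$ for every root $\gamma$ yields $(\b_p,\b_{p+2})=-1$ and $(\b_p,\b_{p+1})=(\b_{p+1},\b_{p+2})=1$. Applying the displayed identity above to each of the two adjacent pairs gives $(\a_{i_p},\a_{i_{p+1}})=(\a_{i_{p+1}},\a_{i_{p+2}})=-1$, so $|i_p-i_{p+1}|=|i_{p+1}-i_{p+2}|=1$. The main obstacle is then to rule out $i_p\neq i_{p+2}$; otherwise $\{i_p,i_{p+2}\}=\{i_{p+1}-1,i_{p+1}+1\}$, and taking $i_p=j-1$, $i_{p+1}=j$, $i_{p+2}=j+1$ (the other labeling being symmetric), direct reflection computations give $\b_p=v_p(\a_{j-1})$, $\b_{p+1}=v_p(\a_{j-1}+\a_j)$, $\b_{p+2}=v_p(\a_{j-1}+\a_j+\a_{j+1})$, so the coefficient of $\a_{j-1}$ in $\b_p+\b_{p+2}$ is $2$ while that in $\b_{p+1}$ is $1$, contradicting $\b_{p+1}=\b_p+\b_{p+2}$. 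Hence $i_p=i_{p+2}$, and setting $k:=i_p$ we conclude $\v_\a=\v_\a^k$, $\v_\b=\v_\b^k$, $\v_{\a+\b}=\v_{\a+\b}^{k\pm 1}$, as required.
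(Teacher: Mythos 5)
Your proof is correct. The paper states this lemma as an ``easy observation'' and supplies no proof at all, so there is nothing to compare against; your argument --- translating adjacency in a convex order into consecutive letters $i_p,i_{p+1}$ of a reduced word via $\b_p=s_{i_1}\cdots s_{i_{p-1}}(\a_{i_p})$, deriving $(\b_p,\b_{p+1})=-(\a_{i_p},\a_{i_{p+1}})$, and excluding the pattern $i_{p+1}-1,\,i_{p+1},\,i_{p+1}+1$ by comparing simple-root coefficients against $\b_{p+1}=\b_p+\b_{p+2}$ --- is the natural verification, and every step checks out.
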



\subsection{Singular pairs}

We now begin working towards our proof of Theorem \ref{T3}.
Fix a diagram $\l\in\mathcal C_n$.
For any numbers $i$ and $j$ such that $1\le i<j\le n$, the
pair $(i,j)$ will be called \textit{singular} if $c_i=c_j$.
So the singularity of $(i,j)$ means that $i$ and $j$
occur in the same diagonal of $\L$. If $\b=(i,j)$ is a 
singular pair then the function \eqref{bsing} is singular
at the point \eqref{point}, hence the terminology.\vskip5pt
The next lemma is the core of the fusion procedure. 
It appeared in \cite{N} in the symmetric group setting, and 
can be proved by a direct computation using~\eqref{H3}. 
Note that the middle factors of the products 
appearing at the left hand sides
of \eqref{primaf}, \eqref{secondaf} are singular at 
\eqref{point} but the products, upon restricting to 
$\mathcal F_\l$, are~not.

\begin{lemma}
\label{ffusion} 
Suppose that the pair $(i,j)$ is singular.
\\{\bf(a)} 
If\/ $i,i+1$ belong to the same row of\/ $\L$, 
then for any $k=1,\ldots,n-2$ 
\begin{equation}
\label{primaf}
(\,\v_{i,i+1}^{\,k}\v_{i,j}^{\,k+1}\v_{i+1,j}^{\,k})|_{\,\mathcal F_\l}
(q^{c_1},\ldots,q^{c_n})=
(1+T_k)\,(T_{k+1}T_k-q\,T_{k+1}-q).
\end{equation}
{\bf(b)}
If $j-1,j$ belong to the same row of\/ $\L$, 
then for any $k=1,\ldots,n-2$ 
\begin{equation}
\label{secondaf}
(\,\v_{i,j-1}^{\,k}\v_{i,j}^{\,k+1}\v_{j-1,j}^{\,k})|_{\,\mathcal F_\l}
(q^{c_1},\ldots,q^{c_n})=
(T_kT_{k+1}-q\,T_{k+1}-q)\,(1+T_k).
\end{equation}
\end{lemma}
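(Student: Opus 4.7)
The plan is to treat both parts of the lemma as a direct computation on $\mathcal F_\l$ with the singular parameter $t = x_j/x_i$ tending to $1$, and to show that the simple pole in the middle factor is cancelled by a zero produced from the neighbouring factor via the quadratic relation \eqref{H3}. All three factors have to be brought into a common form in this variable before one can evaluate.

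First I would set up coordinates. In part (a), since $i,i+1$ are in the same row, $c_{i+1}=c_i+1$, so on $\mathcal F_\l$ one has $x_{i+1}/x_i = q$ identically, giving $\v^{\,k}_{i,i+1}=T_k+1$. Using $x_{i+1}=qx_i$ on $\mathcal F_\l$, the remaining two factors can be rewritten entirely in $t = x_j/x_i$:
$$
\v^{\,k+1}_{i,j}=T_{k+1}+\frac{1-q}{1-t},\qquad
\v^{\,k}_{i+1,j}=T_k+\frac{1-q}{1-t/q}.
$$
Set $B=(1-q)/(1-t/q)$; at $t=1$ this is finite and equals $-q$. The pole lives only in the middle factor.

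The key algebraic step is the identity
$$
(T_k+1)(T_k+B)=(q+B)(T_k+1),
$$
which is immediate from $T_k^2=(q-1)T_k+q$. Expanding
$$
(T_k+1)\Bigl(T_{k+1}+\tfrac{1-q}{1-t}\Bigr)(T_k+B)
=(T_k+1)T_{k+1}(T_k+B)+\tfrac{1-q}{1-t}(T_k+1)(T_k+B),
$$
the first summand is regular at $t=1$; for the second I rewrite $(1-q)(q+B)/(1-t)$ as $(1-q)/(1-t/q)$, which specialises to $-q$. Combining then yields $(T_k+1)[T_{k+1}(T_k-q)-q]=(1+T_k)(T_{k+1}T_k-qT_{k+1}-q)$, which is \eqref{primaf}.

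Part (b) runs symmetrically: now $c_j=c_{j-1}+1$ forces $\v^{\,k}_{j-1,j}=T_k+1$, and writing everything in $t=x_j/x_i$ gives the third factor on the right as $T_k+1$ and the first factor as $T_k+B$ with the same $B=(1-q)/(1-t/q)$. The same quadratic identity in the form $(T_k+B)(T_k+1)=(q+B)(T_k+1)$ collapses the product to $(T_k-q)T_{k+1}(T_k+1)-q(T_k+1)=(T_kT_{k+1}-qT_{k+1}-q)(1+T_k)$, establishing \eqref{secondaf}. The main technical point throughout is verifying that the only contributions that survive the limit come from the term where the quadratic identity produces the cancelling factor $q+B$; the other terms obtained from distributing the middle factor either stay regular or are killed outright. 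No obstacle beyond careful bookkeeping is expected, since the row condition has already removed two of the three potentially singular contents and the third singularity is tamed by \eqref{H3}.
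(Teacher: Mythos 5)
Your computation is correct and is exactly the ``direct computation using \eqref{H3}'' that the paper invokes without writing out: the row condition forces the outer factor to become $T_k+1$ on $\mathcal F_\l$, and the identity $(T_k+1)(T_k+B)=(q+B)(T_k+1)$ with $q+B=(1-t)/(1-t/q)$ cancels the pole of the middle factor, giving the stated three-term expressions. No discrepancy with the paper's (unwritten) argument.
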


\medskip

Write $\l=(\l_1,\ldots,\l_r)$ and $\l_k=[a_k,b_k]$ for $k=1,\ldots,r$. 
For $1\le k\le l\le r$ put
\begin{equation}
\label{notazione}
\Dp_{kl}=
\{(i,j)\mid 1\le i<j\le n
\ \textrm{and $i,j$ are in rows $k,l$ of $\L$ respectively} 
\}.
\end{equation}
Then the set $\Dp$ becomes a disjoint union of the subsets 
\eqref{notazione}.
Define a total order $<$ on the set $\Dp$ by
first defining an order on every subset \eqref{notazione}:
the restriction of the order $<$ to
$\Dp_{kl}$ is $<_1$ if $b_k-k=b_l-l$, and is $<_2$ otherwise;
see \eqref{o1} and \eqref{o2}.
Note that if the row $k$ or $l$ of $\l$ has
length one, then the restrictions of
$<_1$ and $<_2$ to $\Dp_{kl}$ are the same.
Now order the subsets \eqref{notazione} relative to each other: 
\begin{equation}
\label{dpp}
\Dp_{11}<\Dp_{12}<\Dp_{22}
<\ldots<
\Dp_{1r}<\Dp_{2r}<\ldots<\Dp_{rr}\,.
\end{equation}
The so defined order $<$ on $\Dp$ will be called \textit{special}.
A straightforward verification of the conditions (i\,,\,ii)
in Definition \ref{convex} shows that the special order $<$ is convex.
Let $w_0=s_{i_1}\dots s_{i_m}$ be the corresponding reduced 
decomposition.
As before, let $\b_1,\ldots,\b_m$ be the elements
of $\Dp$ written in this order.
Here $m=n(n-1)/2$.

\begin{lemma}
\label{lemmafund1}
If $\b_k=(i,j)$ then $i_k=j-i$.
\end{lemma}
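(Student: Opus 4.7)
The identity $\beta_k=w_{k-1}(\alpha_{i_k})$ (with $w_{k-1}=s_{i_1}\cdots s_{i_{k-1}}$) means $w_{k-1}^{-1}(i)=i_k$ and $w_{k-1}^{-1}(j)=i_k+1$. Viewing $w_{k-1}^{-1}(m)$ as the current position of strand $m$ in the wiring diagram of $w_{k-1}$, the lemma amounts to showing that just before the $k$-th crossing strand $i$ sits at position $j-i$ and strand $j$ at position $j-i+1$. The plan is to verify this by tracking these positions block by block as the special order is executed.

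Set $N_l=|\l_1|+\cdots+|\l_l|$, so that row $l$ of $\L$ contains the labels $N_{l-1}+1,\dots,N_l$. I prove by induction on $l$ the following state description: at the end of processing $W_l:=\Dp_{1l}\cup\cdots\cup\Dp_{ll}$, strand $m$ sits at position $N_l-m+1$ for $1\le m\le N_l$ and at position $m$ for $m>N_l$. The inductive step is carried out sub-block by sub-block. At the start of $\Dp_{kl}$ (after $\Dp_{1l},\dots,\Dp_{(k-1)l}$ have been executed inside $W_l$) one verifies that strand $m$ in row $k'<k$ sits at position $N_l-m+1$, strand $m$ in row $k'\in\{k,\ldots,l-1\}$ sits at position $N_{l-1}-m+1$, strand $m$ in row $l$ sits at position $m-N_{k-1}$, and strand $m$ in row $k'>l$ sits at position $m$. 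This follows from the inductive hypothesis together with the fact that each processed $\Dp_{k''l}$ with $k''<k$ shifts row $k''$ right by $|\l_l|$ and row $l$ left by $|\l_{k''}|$ while leaving all other rows fixed.

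Given this starting state, processing $\Dp_{kl}$ (with $k<l$) in either of the admissible orders $<_1$ or $<_2$ yields the same bookkeeping. Just before the pair $(i,j)$ is crossed, the earlier pairs inside $\Dp_{kl}$ that involve strand $i$ are exactly $\{(i,j'):N_{l-1}<j'<j\}$ (each has moved $i$ right by one), while those involving strand $j$ are $\{(i',j):N_{k-1}<i'<i\}$ (each has moved $j$ left by one). Adding these shifts to the starting positions gives
\begin{align*}
w_{k-1}^{-1}(i)&=(N_{l-1}-i+1)+(j-N_{l-1}-1)=j-i,\\
w_{k-1}^{-1}(j)&=(j-N_{k-1})-(i-N_{k-1}-1)=j-i+1,
\end{align*}
so $i_k=j-i$ and the two strands are adjacent as required. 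The diagonal sub-blocks $\Dp_{kk}$ (ordered by $<_1$, involving only strands of row $k$, which at that moment occupy positions $1,\dots,|\l_k|$) are handled by the same count; the earlier $\Dp_{kk}$-pairs $(i',i)$ with $i'<i$ push $i$ left by one each, the earlier pairs $(i,j')$ with $j'<j$ push it right by one each, and the arithmetic again yields position $j-i$ for strand $i$.

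The key technical point is to confirm that the two admissible orders on $\Dp_{kl}$ really produce the same list of earlier pairs involving a given strand. In $<_1$ the earlier pairs involving $i$ are those with primary coordinate $i$ and strictly smaller secondary coordinate, whereas in $<_2$ they are those with strictly smaller primary coordinate and shared first coordinate $i$; in both cases the set is $\{(i,j'):N_{l-1}<j'<j\}$, and the corresponding statement for $j$ is analogous. I expect this independence check, combined with the careful propagation of the state description from one sub-block to the next, to be the main (but essentially routine) obstacle; once these are in place the rest of the argument is mechanical.
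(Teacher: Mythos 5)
Your proof is correct, but it takes a genuinely different route from the paper's. The paper argues structurally: the statement is immediate for the lexicographic order $<_1$, whose associated reduced word is \eqref{w0_1}, and the special order is obtained from $<_1$ by a sequence of interchanges of adjacent orthogonal pairs; each such interchange corresponds to a commutation $s_as_b=s_bs_a$ with $|a-b|>1$ in the reduced word, which does not change the simple reflection attached to any given pair, so the property $i_k=j-i$ is inherited. You instead verify the conclusion by direct bookkeeping in the wiring diagram of the special reduced word, propagating an explicit description of the strand positions block by block (first $W_l$, then the sub-blocks $\Dp_{kl}$) and counting, for each pair $(i,j)$, the earlier crossings that displace strands $i$ and $j$. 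Your computation checks out: the state descriptions at the end of $W_l$ and at the start of $\Dp_{kl}$ are consistent with the block order \eqref{dpp}, the set of earlier pairs of $\Dp_{kl}$ involving a fixed strand is indeed the same for $<_1$ and $<_2$, the diagonal blocks $\Dp_{kk}$ are handled correctly, and the key fact that the crossing attached to a pair $(a,b)\in\Dp$ always moves strand $a$ one step right and strand $b$ one step left follows from the decomposition being reduced. What each approach buys: the paper's argument is shorter and conceptually cleaner, but it leans on the asserted (not proved) claim that the special order lies in the commutation class of $<_1$, i.e.\ is reachable by orthogonal adjacent swaps alone with no braid moves; your argument is longer and more computational but entirely self-contained, and as a by-product it yields the explicit intermediate permutations $w_{k-1}$, which the paper's proof does not provide.
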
 

\begin{proof}
Recall that the order $<_1$ on $\Dp$ 
corresponds to the reduced decomposition \eqref{w0_1}.
For this reduced decomposition, the statement of the
lemma is clearly true. On the other hand,
it is easy to see that the special order can be obtained from
$<_1$ by a sequence of switches of adjacent orthogonal pairs.
Switching two orthogonal pairs corresponds to
switching two adjacent simple transpositions from $\mathcal S_n$
in a reduced decomposition of $w_0$. 
Hence the statement is true for the order $<$ too.
\end{proof}

Denote by $d_\l$ the number of singular pairs for $\l$.
Denote by $p_\l$ the number of pairs of
distinct non-empty parallel rows of $\l$.
Any pair of distinct parallel rows of length $l$
gives rise to $l$ singular pairs.
In particular, we have $d_\l\ge p_\l\,$.

Further, denote by $\Rl$ the collection of all singular pairs for $\l$
except the pairs $(i,j)$ where $i$ and $j$ are the first numbers 
in two parallel rows of $\L$.
The set $\Rl$ consists of exactly $d_\l-p_\l$ elements.

Let $\xi=(i,j)=\a_i+\ldots+\a_{j-1}$ be a singular pair for $\l$. 
If\/ $i,i+1$ belong to the same row of $\L$, then
denote $\xi^+=(i+1,j)$ so that $\xi=\a_i+\xi^+$.
Similarly, if $j-1,j$ belong to the same row of $\L$, then
denote $\xi^-=(i,j-1)$ so that $\xi=\a_{j-1}+\xi^-$.

\begin{lemma}
\label{lemmafund2}
For each\/ $\xi\in\Rl$ one can choose\/ $e(\xi)\in\{\pm\}$
such that\/ $\xi$ and\/ $\xi^{\,e(\xi)}$ are adjacent 
in the special order, while for all\/ $\xi\in\Rl$ the elements\/
$\xi^{\,e(\xi)}$ are~distinct.
\end{lemma}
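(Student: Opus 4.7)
For a singular pair $\xi=(i,j)\in\Rl$ let $k$ and $l$ denote the rows of $\L$ containing $i$ and $j$, so $k<l$ and $\xi\in\Dp_{kl}$. My plan is to read $e(\xi)$ off directly from this block: set $e(\xi)=-$ when the special order restricts to $<_1$ on $\Dp_{kl}$ (equivalently, when $b_k-k=b_l-l$), and $e(\xi)=+$ when it restricts to $<_2$. Three things then need to be checked on each block --- existence of $\xi^{e(\xi)}$, adjacency of $\xi$ and $\xi^{e(\xi)}$, and distinctness of the resulting elements.

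For existence I would use the bounds $c_i\in[a_k-k,\,b_k-k]$ and $c_j\in[a_l-l,\,b_l-l]$ together with the singularity relation $c_i=c_j$. When $\Dp_{kl}$ carries $<_2$, so $b_l-l<b_k-k$, one gets $c_i=c_j\le b_l-l<b_k-k$, hence $i+1$ still lies in row $k$ and $\xi^+$ is defined. When $\Dp_{kl}$ carries $<_1$, iterating Definition~\ref{CD} yields $a_l-l\le a_k-k$, and $c_j\ge a_l-l$ with equality forcing $c_i=c_j=a_l-l=a_k-k$: rows $k$ and $l$ then have to be parallel and $(i,j)$ has to be their pair of first boxes, which is exactly the pair excluded from $\Rl$. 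Hence for $\xi\in\Rl$ the inequality is strict and $\xi^-$ is defined.

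For adjacency I will appeal to the fact that each $\Dp_{kl}$ is a contiguous block of the special order \eqref{dpp}, which reduces the question to adjacency inside the block. Under $<_1$, pairs sharing a first coordinate are listed by increasing second coordinate, so $(i,j-1)$ and $(i,j)$ are consecutive; under $<_2$, pairs sharing a second coordinate are listed by increasing first coordinate, so $(i,j)$ and $(i+1,j)$ are consecutive.

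Distinctness will follow from the observation that $\xi^{e(\xi)}$ keeps both coordinates in the same rows as $\xi$, hence still belongs to the block $\Dp_{kl}$. So an equality $\xi_1^{e(\xi_1)}=\xi_2^{e(\xi_2)}$ would place $\xi_1$ and $\xi_2$ in a common block; but then $e$ takes the same value on both, and the equation collapses to $\xi_1=\xi_2$. The only delicate step --- and what I expect to be the main obstacle --- is the existence check in the $<_1$ case, where the definition of $\Rl$ has to be matched precisely to the singular pairs for which $\xi^-$ could otherwise fail to exist.
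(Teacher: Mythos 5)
Your proposal is correct and follows essentially the same route as the paper's proof: the same choice of $e(\xi)$ according to whether $b_k-k=b_l-l$ on the block $\Dp_{kl}$, the same use of Definition~\ref{CD} and of the exclusion defining $\Rl$ to guarantee that $\xi^{e(\xi)}$ exists, adjacency read off from the restriction of $<_1$ or $<_2$ to the contiguous block, and distinctness from the fact that $\xi^{e(\xi)}$ stays in the same block as $\xi$. The minor reorganisations (treating the boundary case $c_j=a_l-l$ directly instead of splitting into parallel and non-parallel subcases, and deducing distinctness from membership in a common block) do not change the substance of the argument.
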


\begin{proof}
For any $\xi\in\Dp$ there is a unique subset 
$\Dp_{kl}\subset\Dp$ containing $\xi\,$;
here $k\le l$. Write $\xi=(i,j)\,$; 
the numbers $i$ and $j$
occur in the rows $k$ and $l$ of $\L$ respectively.
Define the function $\xi\mapsto e(\xi)$
so that $e(\xi)=-$ if $b_k-k=b_l-l$, 
and $e(\xi)=+$ otherwise.
This is a function on the set $\Dp$.
Let us show that the restriction of this
function to the subset $\Rl\subset\Dp$ has all the required properties. 

Let $\xi\in\Rl$. Then $k<l\,$; the numbers $i$ and $j$ 
occur in the same diagonal of $\L$. 

First suppose that 
$b_k-k=b_l-l$. If $a_k-k=a_l-l$ then the rows $k$ and $l$ of $\L$
are parallel. Then the number $j-1$ belongs to the same 
row $l$ of $\L$ as $j$, because $j$ cannot be the first
number in its row. If $a_k-k\neq a_l-l$ then $a_k-k>a_l-l$
by Definition \ref{CD}, and again $j-1$ belongs to the same 
row $l$ as $j$. Hence the pair 
$\xi^-=(i,j-1)\in\Dp_{kl}$ is defined, and the two pairs
$\xi^-,\xi$ are adjacent in the order $<_1\,$.
The latter order coincides with $<$ on the subset 
$\Dp_{kl}\subset\Dp\,$.

Now suppose that $b_k-k\neq b_l-l\,$. Then $b_k-k>b_l-l$ 
by Definition \ref{CD}. Then the number $i+1$ belongs to the same 
row $k$ of $\L$ as $i$, because the numbers $i$ and $j$ occur in
the same diagonal of $\L$. Hence the pair 
$\xi^+=(i+1,j)\in\Dp_{kl}$ is defined, and the two pairs
$\xi,\xi^+$ are adjacent in the order $<_2\,$.
The latter order coincides with $<$ on the subset 
$\Dp_{kl}\subset\Dp\,$. 

Now let $\xi$ run through the set $\Rl$.
The pairs of the form $\xi^-$ are different from each other,
and so are the pairs of the form $\xi^+$.
Moreover, the pairs of the form $\xi^-$
belong to the subsets \eqref{notazione} with 
$b_k-k=b_l-l$, while the pairs of the form $\xi^+$
belong to the subsets \eqref{notazione} with $b_k-k>b_l-l$. 
Since all the subsets \eqref{notazione} are disjoint, all pairs
of the form $\xi^-$ are different from all those of the form $\xi^+$.
\end{proof}

For example,
consider the Cherednik diagram  $\l=([1,2],[2,3],[2,3])\,$;
see \eqref{diaggg}. Here $d_\l=4$ and $p_\l=1$.
The special order on $\Dp$ is
given by the following table:
\begin{align*}
&\Dp_{11}:\quad(1,2),\\
&\Dp_{12}:\quad(1,3),\,
(1,4),\,
(2,3),\,
(2,4),\\
&\Dp_{22}:\quad(3,4),\\
&\Dp_{13}:\quad(1,5),\,(2,5),\,(1,6),\,(2,6),\\
&\Dp_{23}:\quad(3,5),\,(4,5),\,(3,6),\,(4,6),\\
&\Dp_{33}:\quad(5,6).
\end{align*}
Here the collection 
$\Rl$ consists of all singular pairs except the pair
$(1,3)$. In the next table all singular pairs are set in bold; 
the pairs $\xi\in\Rl$
are underlined together with their corresponding pairs $\xi^{\,e(\xi)}\,$:

\begin{align*}
&\Dp_{11}:\quad(1,2),\\
&\Dp_{12}:\quad{\bf(1,3)},\,(1,4),\,\underline{(2,3),\,{\bf(2,4)}},\\
&\Dp_{22}:\quad(3,4),\\
&\Dp_{13}:\quad(1,5),\,(2,5),\,\underline{{\bf(1,6)},\,(2,6)},\\
&\Dp_{23}:\quad(3,5),\,(4,5),\,\underline{{\bf(3,6)},\,(4,6)},\\
&\Dp_{33}:\quad(5,6).
\end{align*}

In the proof of Lemma \ref{lemmafund2},
the value $e(\b)$ was defined for any $\b\in\Dp$.
Hence we can uniquely divide $\Dp$
into new ordered subsets 
$\Dp_1,\ldots,\Dp_s$ so that:
\begin{enumerate}
\item[(i)] 
$\Dp_1<\ldots<\Dp_s\,$;
\item[(ii)]
the function $e(\b)$ is constant on each of the subsets
$\Dp_1,\ldots,\Dp_s\,$;
\item[(iii)]
the subsets $\Dp_1,\ldots,\Dp_s\,$ are maximal with the properties (i\,,\,ii).
\end{enumerate}
Each of the subsets
$\Dp_1,\ldots,\Dp_s$ is a union of certain subsets \eqref{notazione}.
For the diagram $\l=([1,2],[2,3],[2,3])$ from
the previous example, we have $s=3$ and
$$
\Dp_1=\Dp_{11}\sqcup\Dp_{12}\sqcup\Dp_{22}\,,
\quad
\Dp_2=\Dp_{13}\sqcup\Dp_{23}\,,
\quad
\Dp_3=\Dp_{33}\,.
$$
For any Cherednik diagram $\l$ and $t=1,\ldots,s$
the subset $\Dp_1\sqcup\ldots\sqcup\Dp_t$ is biconvex. 

\begin{proposition}
\label{regular} 
Restriction of the function $\dd_\l\v_{0}$ 
to $\mathcal F_\l$ is regular at \eqref{point}.
\end{proposition}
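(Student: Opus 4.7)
The plan is to show that the $d_\l$ simple poles of $\v_0|_{\mathcal F_\l}$ at \eqref{point} — one for each singular pair — are exactly matched by the $p_\l$ zeros of $\dd_\l|_{\mathcal F_\l}$ plus $d_\l - p_\l = |\Rl_\l|$ cancellations arising from the fusion identities of Lemma~\ref{ffusion}.

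First I would expand $\v_0$ using the reduced decomposition of $w_0$ coming from the special convex order of Subsection~\ref{subsection2}, so that $\v_0 = \v_{\b_1}^{i_1}\cdots\v_{\b_m}^{i_m}$. The only factors whose restriction to $\mathcal F_\l$ is singular at \eqref{point} are the $\v_\b^{i_k}$ indexed by singular pairs $\b$, each contributing a simple pole. For the $p_\l$ singular pairs $\b = (i,j) \notin \Rl_\l$, namely those in which $i,j$ are the leading boxes of two distinct parallel rows, the matching factor $1-x_j/x_i$ in $\dd_\l$ cancels the pole on the nose.

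The remaining $|\Rl_\l|$ poles are to be cancelled via fusion. For each $\xi = (i,j) \in \Rl_\l$ I would first invoke Lemma~\ref{lemmafund2} to obtain a companion pair $\xi^{e(\xi)}$ adjacent to $\xi$ in the special order, and then invoke Lemma~\ref{modifiyorder} to deform the convex order so that the corresponding simple root $\sigma$ — namely $(j-1,j)$ when $e(\xi) = -$ and $(i,i+1)$ when $e(\xi) = +$ — is brought immediately flush with $\{\xi,\xi^{e(\xi)}\}$. This packages $\xi$ into an adjacent $A_2$ triple of roots whose middle element (by convexity) is the singular pair $\xi$ itself. By Lemma~\ref{almostbraid} the three corresponding factors carry superscripts of the form $k,k\pm 1,k$, so that Lemma~\ref{ffusion} applies and replaces the singular product $\v_\sigma\v_\xi\v_{\xi^{e(\xi)}}$, restricted to $\mathcal F_\l$ and specialized at \eqref{point}, by the regular right-hand side of \eqref{primaf} or \eqref{secondaf}. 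Because $\v_0$ is independent of the choice of reduced decomposition (Lemmas~\ref{L2} and~\ref{braid}), the resulting value is unambiguous.

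The main obstacle is the simultaneous compatibility of these local modifications across all $\xi \in \Rl_\l$: a rearrangement that isolates one triple must not destroy a triple already prepared for an earlier pair. Here I would exploit two features. First, the function $e$ is constant on each block $\Dp_1,\ldots,\Dp_s$, so within one block all rearrangements are of the same type (all pulling $\sigma$ in from the same side). Second, Lemma~\ref{modifiyorder} preserves the order on an entire half-space relative to the pair it modifies, so processing the singular pairs block by block in the order $\Dp_1 < \cdots < \Dp_s$, and within each block in a suitable order consistent with the side on which $\sigma$ is pulled in, the modifications stack without collision. Once each $\xi \in \Rl_\l$ has been packaged into its own adjacent $A_2$ triple and Lemma~\ref{ffusion} applied, all $d_\l$ potential poles of $\dd_\l\v_0|_{\mathcal F_\l}$ at \eqref{point} have been cancelled, which is the proposition.
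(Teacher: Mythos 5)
Your proposal is correct and follows essentially the same route as the paper: the $p_\l$ poles attached to leading boxes of parallel rows are cancelled directly by $\dd_\l$, and each remaining singular pair in $\Rl$ is packaged, via Lemmas \ref{lemmafund2}, \ref{modifiyorder} and \ref{almostbraid}, into an adjacent $A_2$ triple with the singular root in the middle, to which Lemma \ref{ffusion} applies, the block decomposition $\Dp_1<\cdots<\Dp_s$ and the constancy of $e$ on blocks ensuring the local rearrangements do not collide. The only point the paper makes explicit that you leave implicit is the consequence of Lemma \ref{comblemma} that for $\xi\in\Rl$ the whole interval of roots between $\xi$ and the relevant simple root lies in a single block $\Dp_t$, which is what guarantees that pulling that simple root flush with $\xi$ stays inside $\Dp_t$ and leaves the already processed blocks untouched.
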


\begin{proof} 
We have already observed that the restriction to $\mathcal F_\l$
of any factor $\v_\beta$ of $\v_{0}$ with a non-singular $\b$
is regular at the point \eqref{point}.
Let us explain the idea of the proof in the particular case
$d_\l=1$. First suppose that $p_\l=1$. Then it suffices to consider
the diagram
$\l=\{[1,1],[2,2]\}$. Here
$$
\dd_\l\v_0\,(x_1,x_2)=
(1-x_2/x_1)\left(T_1+\frac{1-q}{1-x_2/x_1}\right)=
(1-x_2/x_1)\,T_1+{1-q}\,,
$$
which is regular and has the value $1-q$ at the point $(x_1,x_2)=(1,1)$.

Now let $p_\l=0$, then $\dd_\l=1$.
The set $\Rl$ consists of a singular pair $\xi$.
Write $\xi=(i,j)$ and suppose that $e(\xi)=-\,$; 
the case of $e(\xi)=+$ will be similar.
By Lemmas \ref{lemmafund1} and \ref{lemmafund2}, the product $\v_0$
written using the order $<$ on $\Dp$ has the~form
\begin{equation}
\label{underlin}
\ldots\,
\underline{\v_{i,j-1}^{\,k}\,\v_{i,j}^{\,k+1}}
\,\ldots\,
\v_{j-1,j}^1\,\ldots
\end{equation}
where $k=j-i-1$ and we underlined the factors corresponding to 
$\xi^-=(i,j-1)$ 
and $\xi$. Using Lemmas \ref{modifiyorder} and \ref{braid}, 
modify the order of pairs $(i,j),\ldots,(j-1,j)$ 
and the order of the corresponding
factors of the product $\v_0$ to write this product~as
\begin{equation}
\label{threefac}
\ldots\,\v_{i,j-1}^{\,k}\,\v_{i,j}^{\,k+1}\,\v_{j-1,j}^{\,k}\,\ldots
\end{equation}
where we also used Lemma \ref{almostbraid}.
Restrict the three factors in \eqref{threefac} to $\mathcal F_\l$ 
and note that the last one becomes $1+T_k$ upon restriction.
Now use Lemma \ref{ffusion}(b) and get, by 
evaluating at \eqref{point} the restriction to $\mathcal F_\l$ 
of the product of the three factors,
$$
(T_kT_{k+1}-q\,T_{k+1}-q)\,(1+T_k).
$$
We can then take the factor $\v_{j-1,j}$ 
back to its original position  
and evaluate the restrictions to $\mathcal F_\l$
of other elementary factors. In this way we obtain
$$
\v_0\,|_{\,\mathcal F_\l}(q^{c_1},\ldots,q^{c_n})=
A\,(T_kT_{k+1}-q\,T_{k+1}-q)\,B\,,
$$
$$
A=\prod_{(k,l)<(i,j-1)}
\v_{kl}\,|_{\,\mathcal F_\l}(q^{c_1},\ldots,q^{c_n})\,,
\quad
B=\prod_{(k,l)>(i,j)}
\v_{kl}\,|_{\,\mathcal F_\l}(q^{c_1},\ldots,q^{c_n})\,.
$$
So the final effect of our procedure has been to make a ``fusion" of the
two adjacent elementary factors underlined in \eqref{underlin}
into the three-term factor 
\begin{equation}
\label{ttf}
F_{k,k+1}=T_kT_{k+1}-q\,T_{k+1}-q\,,
\end{equation}
and to evaluate
all other elementary factors at \eqref{point} straightforwardly.

Now suppose that the set $\Rl$ consists of a singular pair $\xi=(i,j)$
with $e(\xi)=+\,$; we still assume that $p_\l=0$. 
By Lemmas \ref{lemmafund1} and \ref{lemmafund2}, the product $\v_0$
has the~form
\begin{equation}
\label{underlin+}
\ldots\,
\v_{i,i+1}^1\,
\,\ldots\,
\underline{\v_{ij}^{\,k+1}\,\v_{i+1,j}^{\,k}}
\,\ldots
\end{equation}
where we underlined the factors corresponding to 
$\xi$ and $\xi^+=(i+1,j)$. Here $k=j-i-1$ again.
Modify the order of pairs $(i,i+1),\ldots,(i,j)$ 
and the order of the corresponding
factors of the product $\v_0$ to write this product~as
\begin{equation}
\label{threefac+}
\ldots\,\v_{i,i+1}^{\,k}\,\v_{ij}^{\,k+1}\,\v_{i+1,j}^{\,k}\,\ldots
\end{equation}
where we used Lemma \ref{almostbraid}.
Restrict the three factors in \eqref{threefac+} to $\mathcal F_\l$ 
and note that the first one becomes $1+T_k$ upon restriction.
Now use Lemma \ref{ffusion}(a) and get, by 
evaluating at \eqref{point} the restriction to $\mathcal F_\l$ 
of the product of the three factors,
$$
(1+T_k)\,(T_{k+1}T_k-q\,T_{k+1}-q).
$$
Take the factor $\v_{i,i+1}$ 
back to its original position
and evaluate the restrictions to $\mathcal F_\l$
of other elementary factors. In this way we obtain
$$
\v_0\,|_{\,\mathcal F_\l}(q^{c_1},\ldots,q^{c_n})=
C\,(T_{k+1}T_k-q\,T_{k+1}-q)\,D\,,
$$
$$
C=\prod_{(k,l)<(i,j)}
\v_{kl}\,|_{\,\mathcal F_\l}(q^{c_1},\ldots,q^{c_n})\,,
\quad
D=\prod_{(k,l)>(i+1,j)}
\v_{kl}\,|_{\,\mathcal F_\l}(q^{c_1},\ldots,q^{c_n})\,.
$$
Here the final effect of our procedure has been to make a ``fusion" of
two adjacent elementary factors underlined in \eqref{underlin+}
to the three-term factor 
$$
T_{k+1}T_k-q\,T_{k+1}-q\,,
$$ 
and to evaluate all other elementary factors at \eqref{point} straightly.

To deal with the general case, we first observe that the factor
$\dd_\l$ takes care of the singularities coming from the singular pairs 
which are not in $\Rl$, as in the case $p_\l=1$ above.
Hence we have only to prove that we can perform the procedure 
used in the case $p_\l=0$ in a coherent way for all singular
pairs from the set $\Rl$. By Lemma \ref{lemmafund2} 
the pairs $\xi^{\,e(\xi)}$ are all different when $\xi$ 
ranges over $\Rl$. Hence it suffices to prescribe the order 
on $\Rl$ in which the previous procedure should be performed.
Once this order is given, one can just repeat the 
argument for each pair in $\xi\in\Rl$.

Take any pair $\xi\in\Rl$ and write $\xi=(i,j)$.
There is a unique subset $\Dp_t\subset\Dp$ 
containing $\xi\,$.
Suppose that $e(\xi)=-\,$.
The key combinatorial fact is that then all
$\b\in\Dp$ with $\xi\le\b\le(j-1,j)$ 
belong to the same subset $\Dp_t\subset\Dp$.
Here we use the special order on $\Dp$.
This key fact follows from the first claim of Lemma \ref{comblemma}.

Write $\Dp=\Dp_1\sqcup\ldots\sqcup\Dp_s$ and suppose that
$e(\Dp_s)=\{-\}$\,.  Then perform the previous procedure 
for all singular pairs in $\Dp_s$
starting from the first to the last;
here we refer to our special order on $\Dp_s$. 
Each time we modify the order of the pairs
following the singular pair which we are dealing with,
and then restore the original order of these following pairs.
In particular, each time we do not affect the singular pairs
we dealt with previously.

Here we have $e(\Dp_{s-1})=\{+\}$\,. 
Let us now deal with the singular pairs in $\Dp_{s-1}$
starting from the last to the first 
singular pair in $\Dp_{s-1}\,$, this procedure
takes place in $\Dp_1\sqcup\ldots\sqcup\Dp_{s-1}\,$.
After that we can deal with the singular pairs contained in 
$\Dp_{s-2}$ starting as above from the first to the last
singular pair. By the key fact, the latter
procedure can performed within the subset $\Dp_{s-2}$
so that the pairs from $\Dp_{s-1}\sqcup\Dp_s$
are not affected by this procedure. Now we can proceed inductively.
An obvious modification of the previous argument 
works in the case $e(\Dp_s)=\{+\}$\,.
\end{proof}

\begin{corollary}
\label{limiting}  
The element $E_{\l}\in\h_n$ can be calculated as follows:
\begin{enumerate}
\item[(1)] 
arrange the elementary factors $\v_\b$ in \eqref{phir} 
according to the special order;
\item[(2)] 
replace every two adjacent factors corresponding
to $\b\in\Rl$ by a three-term factor;
\item[(3)]
replace each factor indexed by a singular pair $\b\notin\Rl$  
by the scalar\/ $1-q\,$;
\item[(4)] 
evaluate the factors corresponding to the remaining non-singular pairs.
\end{enumerate}
\end{corollary}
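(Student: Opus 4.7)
The plan is to unpack the construction carried out in the proof of Proposition \ref{regular} into the four-step recipe stated, verifying that the procedure described there produces precisely the prescribed result. In fact, once one accepts that proof, the corollary is mainly a matter of reading off the output.

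First, by Lemma \ref{braid} together with Lemma \ref{lemmafund1}, I would rewrite the product \eqref{phir} so that the elementary factors $\v_\b$ appear according to the special order on $\Dp$; this is a formal rearrangement that leaves $\v_0$ unchanged and provides step (1). Second, for each singular pair $\xi\in\Rl$, I would apply the local fusion step detailed in Proposition \ref{regular}: by Lemma \ref{modifiyorder} and Lemma \ref{almostbraid}, bring an auxiliary non-singular factor $\v_{i,i+1}$ or $\v_{j-1,j}$ into a position adjacent to $\v_\xi$ and $\v_{\xi^{e(\xi)}}$, apply Lemma \ref{ffusion} to resolve the singularity upon restriction to $\mathcal F_\l$ and evaluation at \eqref{point}, and then move the auxiliary factor back. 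The resulting product has the form $(1+T_k)(T_{k+1}T_k-qT_{k+1}-q)$ or $(T_kT_{k+1}-qT_{k+1}-q)(1+T_k)$ depending on $e(\xi)$; the trailing $1+T_k$ is the pointwise evaluation of the auxiliary factor and is restored to its original slot, while the three-term factor remains in place of the two adjacent elementary factors corresponding to $\xi$ and $\xi^{e(\xi)}$, yielding step (2).

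Third, for each singular pair $\b=(i,j)\notin\Rl$, both $i$ and $j$ are the leftmost entries of two parallel rows of $\L$, so the definition of $\dd_\l$ in Subsection \ref{thefun} contributes exactly the factor $1-x_j/x_i$. Multiplying this against $\v_\b=T_{j-i}+\langle x_j/x_i\rangle=T_{j-i}+(1-q)/(1-x_j/x_i)$ produces $(1-x_j/x_i)T_{j-i}+(1-q)$, whose restriction to $\mathcal F_\l$ at the point \eqref{point} equals the scalar $1-q$ since $x_i=x_j=q^{c_i}$ there; this gives step (3). The remaining pairs $\b$ are non-singular, hence $\v_\b$ is already regular at \eqref{point}, and its restriction to $\mathcal F_\l$ is simply evaluated, giving step (4).

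The only genuine obstruction is that the local fusion manipulations performed at distinct singular pairs $\xi,\xi'\in\Rl$ must not interfere with one another. This is exactly the content of the inductive scheme over $\Dp_1\sqcup\dots\sqcup\Dp_s$ at the end of the proof of Proposition \ref{regular}, whose key combinatorial input is that all $\b$ with $\xi\le\b\le\xi^{e(\xi)}$ lie in the same block $\Dp_t$ (by Lemma \ref{comblemma}) and that by Lemma \ref{lemmafund2} the partners $\xi^{e(\xi)}$ are distinct as $\xi$ varies over $\Rl$. Since this coordination was already established there, the corollary follows by collecting the outputs of that procedure, and no further calculation is needed.
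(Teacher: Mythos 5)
Your proposal is correct and follows essentially the same route as the paper: the corollary is indeed just a systematic reading-off of the procedure carried out in the proof of Proposition \ref{regular}, with the factor $1-x_j/x_i$ of $\dd_\l$ paired against each singular $\b\notin\Rl$ to give the scalar $1-q$, the fusion of Lemma \ref{ffusion} handling each $\xi\in\Rl$ together with its partner $\xi^{\,e(\xi)}$, and the block decomposition $\Dp_1\sqcup\ldots\sqcup\Dp_s$ guaranteeing the local manipulations do not interfere. Nothing is missing.
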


For example, consider again the diagram 
$\l=([1,2],[2,3],[2,3])\,$; see \eqref{diaggg}. Here
\begin{gather*} 
E_\l=
(\dd_\l\v_0)|_{\,\mathcal F_\l}(1,q,1,q,q^{-1},1)=
\\ 
(T_1+1)(1-q)(T_3+1)(T_1T_2-q\,T_2-q)(T_1+1)
(T_4-q)(T_3-q^2(q+1)^{-1})\,\times
\\
(T_5T_4-q\,T_5-q)(T_2-q)(T_1-q^2(q+1)^{-1})(T_3T_2-q\,T_3-q)(T_1+1).
\end{gather*}


\section{End of the proof  of Theorem \ref{T3}}

\setcounter{section}{3}
\setcounter{subsection}{0}
\setcounter{equation}{0}
\setcounter{theorem}{0}

Now we complete the proof of  Theorem \ref{T3} by showing that $E_\l\neq0$.
If the Cherednik diagram $\l$ has no parallel rows, then
Corollary \ref{limiting} immediately shows that the equality
\eqref{lead0} holds for some coefficients $a_w\in\mathbb C(q)$. 
This equality implies that $E_\l\neq0$. However, the equality
\eqref{lead0} does not hold always. For instance, let us 
again consider the diagram $\l=([1,2],[2,3],[2,3])$; see
\eqref{diaggg}. The first two rows of this diagram
are parallel. A direct computation gives
$$
E_\l=q\,(q^2-1)\,
T_1T_3T_4T_3T_5T_4T_2T_1T_3T_2T_1+
\sum_{\ell(w)<11}a_w\,T_w
$$
while $\ell(w_0)=15$. We shall give a similar presentation of 
$E_\l\in\h_n$ for any $\l\in\mathcal{C}_n\,$; see \eqref{desired}. Like \eqref{lead0},
this presentation immediately shows that $E_\l\neq0\,$.

We start with giving another expression for the element
$E_\l$ when the diagram $\l\in\mathcal{C}_n$ consists of one row
only. Then we will employ the notation $E_n$
instead of $E_\l$. As usual, denote 
$[m]_q=(1-q^m)/(1-q)$ for each positive integer~$m$. 

\begin{lemma}
\label{Lemma 1}
We have
\begin{equation}
\label{en}
E_n=\sum_{w\in\mathcal{S}_n}T_w\,,
\end{equation}
\begin{equation}
\label{en2}
E_n^{\,2}=[1]_q\ldots[n]_q\,E_n\,.
\end{equation}
\end{lemma}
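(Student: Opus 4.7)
The plan is to use Corollary \ref{limiting} to write $E_n$ explicitly as a product, then characterize it via one-sided eigenvalue conditions. A single-row diagram has pairwise distinct contents $c_1<\dots<c_n$ (hence no singular pairs) and no parallel rows (so $\dd_\l=1$), so Corollary \ref{limiting} collapses to $E_n = \v_0(q^{c_1},\ldots,q^{c_n})$, and on $\mathcal{F}_\l$ one has $x_{k+1}/x_k=q$, so that every factor of the form $\langle x_{k+1}/x_k\rangle$ evaluates to $1$.

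For each $k=1,\ldots,n-1$ I claim that $T_k E_n=qE_n=E_n T_k$. Since $\v_0$ is independent of the reduced decomposition of $w_0$ chosen to express it (Lemma \ref{L2}) and both $\ell(s_k w_0)$ and $\ell(w_0 s_k)$ equal $\ell(w_0)-1$, one may pick a reduced decomposition starting with $s_k$, which writes $\v_0$ with first factor $\v^k_{\a_k} = T_k + \langle x_{k+1}/x_k\rangle$ specializing to $T_k+1$; hence $E_n=(T_k+1)R_k$, and the Hecke relation $T_k(T_k+1)=q(T_k+1)$ gives $T_k E_n = qE_n$. For the right action pick a reduced decomposition of $w_0$ ending with $s_k$; the final root is
\[
\b_m = s_{i_1}\cdots s_{i_{m-1}}(\a_k) = (w_0 s_k)(\a_k) = -w_0(\a_k) = \a_{n-k},
\]
a simple root, so the last factor is $T_k+\langle q\rangle=T_k+1$ and the same quadratic relation yields $E_n T_k=qE_n$.

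The equality \eqref{en} then follows by uniqueness. Writing $E_n=\sum_w a_w T_w$ and comparing coefficients of $T_v$ on both sides of $T_k E_n=qE_n$ (using $T_k T_w=T_{s_k w}$ when $\ell(s_k w)>\ell(w)$ and $T_k T_w=qT_{s_k w}+(q-1)T_w$ otherwise) forces $a_{s_k v}=a_v$ for every $k$ and $v$, so by connectedness of the Cayley graph of $\mathcal{S}_n$ all $a_w$ are equal, whence $E_n=c\sum_w T_w$ for some $c\in\mathbb C(q)$. In the expansion of $\prod_{k=1}^m(T_{i_k}+\text{const})$ only the full product $T_{i_1}\cdots T_{i_m}=T_{w_0}$ can contribute to the coefficient of $T_{w_0}$, since any proper subword of length $s<m$ lies in $\mathrm{span}\{T_w:\ell(w)\le s\}$; hence $c=1$. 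For \eqref{en2}, iterating $E_n T_k = qE_n$ gives $E_n T_w = q^{\ell(w)}E_n$ for every $w$, so
\[
E_n^{\,2} = E_n\cdot\sum_w T_w = \Big(\sum_{w\in\mathcal{S}_n} q^{\ell(w)}\Big)\,E_n = [1]_q[2]_q\cdots[n]_q\cdot E_n,
\]
by the classical Poincar\'e identity for $\mathcal{S}_n$.

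The main obstacle is the simple-root identification $\b_m=\a_{n-k}$ for a decomposition of $w_0$ ending with $s_k$, which guarantees that the very last elementary factor of $\v_0$ specializes to $T_k+1$ rather than to something less tractable; once this is in place the remaining steps are routine applications of the quadratic Hecke relation and the eigenspace argument.
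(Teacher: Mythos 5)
Your proof is correct and follows essentially the same route as the paper: both establish $T_kE_n=q\,E_n$ by choosing a reduced decomposition of $w_0$ beginning with $s_k$ (so that the leading factor specializes to $T_k+1$), deduce that $E_n$ is proportional to $\sum_w T_w$, and fix the scalar by comparing coefficients of $T_{w_0}$. The only difference is that where the paper cites Murphy for the uniqueness of the one-sided $q$-eigenvector and for \eqref{en2}, you supply the direct coefficient comparison and the Poincar\'e identity yourself, and you additionally verify the right-eigenvector property via the (correct) computation $\b_m=\a_{n-k}$ rather than leaving it implicit.
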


\begin{proof} 
By definition,
$$
E_n=\prod_{(i,j)\in\Dp}
\left(T_{j-i}+\frac{1-q}{1-q^{j-i}}\right)
$$
where the pairs $(i,j)\in\Dp$ are ordered lexicographically.
Using Lemma \ref{braid}, for any $k=1,\ldots,n-1$
we can write the product $E_n$ as $(T_k+1)E$ for some
$E\in\h_n$. By \eqref{H3}, we then have
$T_kE_n=q\,E_n$ for every $k=1,\ldots,n-1$.
By \cite[Lemma 2.4]{M}, then $E_n$ equals the right hand side
of \eqref{en} multiplied by a scalar from $\mathbb{C}(q)$.  
This scalar is actually $1$ because the coefficient at $T_{w_0}$
is $1$ for both sides of \eqref{en}. 
In view of \eqref{en}, the equality \eqref{en2}
is well known; see
for instance \cite {M} once again.
\end{proof}

Using the natural embedding $\h_m\to\h_n$,
we will regard $E_m$ as an element of $\h_n$ whenever $1\le m\le n$.
More generally, for any non-negative integer $h$ such that
$h+m\le n$, we have an embedding $\iota_h:\h_m\to\h_n$ such that
$\iota_h(T_k)=T_{k+h}$ for any $k=1,\ldots,m-1$. Denote
$$
E_m^{\,(h)}=\iota_h(E_m).
$$

\begin{lemma}
\label{Lemma 2}
Suppose that the diagram $\l\in\mathcal{C}_n$ consists of\/
$r$ parallel rows of length\/ $m$ each. 
Then
$$
E_\l=f_m^{\thinspace r(r-1)/2}\,E_m^{\,(0)}E_m^{\,(m)}\ldots E_m^{\,(n-m)},
$$
where
\begin{equation*}
\label{fm}
f_m=(-1)^m\,q^{\,m(m-1)/2}\,(q^m-1)\,.
\end{equation*}
\end{lemma}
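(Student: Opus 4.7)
\emph{Proof plan.}
I would proceed by induction on $r$. The base case $r=1$ is immediate: then $\l$ is a single row of length $m$, $\dd_\l=1$, no pair is singular, and Corollary~\ref{limiting} reduces to evaluating $\v_0$ at $(1,q,\ldots,q^{m-1})$, which equals $E_m$ by the proof of Lemma~\ref{Lemma 1}. Since $f_m^{0}=1$ and $E_m^{(0)}=E_m$, the claimed equality holds.

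For the inductive step I would apply Corollary~\ref{limiting} directly and organize the factors of $E_\l$ according to the block partition \eqref{dpp}. Each within-row block $\Dp_{kk}$ contains only non-singular pairs, whose evaluated product reproduces the single-row element $E_m$ with generators $T_1,\ldots,T_{m-1}$ (by Lemma~\ref{lemmafund1}). Each between-row block $\Dp_{kl}$ with $k<l$ contributes: the scalar $1-q$ from its unique first-box-first-box singular pair (which is excluded from $\Rl$); exactly $m-1$ three-term factors $T_jT_{j+1}-qT_{j+1}-q$ with $j=(l-k)m-1$, arising from the fusion of the pairs in $\Rl\cap\Dp_{kl}$; and simple factors $T_i+c_i$ with $c_i\in\mathbb C(q)$ from the remaining non-singular pairs, whose evaluations are read off directly from the contents.

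The central identity driving the simplification is
\[
(T_j+1)\,(T_jT_{j+1}-qT_{j+1}-q)\;=\;-q\,(T_j+1),
\]
an immediate consequence of the quadratic relation $T_j^2=(q-1)T_j+q$. The plan is to use the commutation $(T_j+1)(T_{j'}+1)=(T_{j'}+1)(T_j+1)$ for $|j-j'|>1$ to move the $(T_j+1)$ factors supplied by the within-row blocks across the product and absorb each three-term factor, producing a scalar $-q$ each time and occasional squares $(T_j+1)^2=(q+1)(T_j+1)$. After all $(m-1)\binom{r}{2}$ absorptions the surviving $T$-dependent element is to be identified with $\prod_{k=1}^{r}E_m^{((k-1)m)}$; in each row $k$ the surviving factors with $T$-indices in $\{(k-1)m+1,\ldots,km-1\}$ should assemble into a shifted copy of the single-row product underlying Lemma~\ref{Lemma 1}, while factors with smaller $T$-indices are to be absorbed into the $E_m$-copies from the within-row blocks.

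The main obstacle is the scalar bookkeeping, especially for $m>2$: the non-singular between-row factors evaluate to $T_i+(1-q)/(1-q^{s})$ with $s\neq 1$ in general, producing $\mathbb C(q)$-constants that must be combined with the contributions from $(1-q)$, $-q$, and $(q+1)$ to recover $f_m^{\,r(r-1)/2}=\bigl((-1)^mq^{m(m-1)/2}(q^m-1)\bigr)^{r(r-1)/2}$; the identity $1-q^m=(1-q)[m]_q$ is essential here. In practice I would first verify the scalar identity in the case $r=2$ (a single pair of parallel rows) by a careful analysis of the interaction of the block $\Dp_{12}$ with its neighbours $\Dp_{11}$ and $\Dp_{22}$, and then propagate to general $r$ via the inductive hypothesis.
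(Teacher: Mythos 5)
Your plan is essentially the paper's own proof: the paper likewise applies Corollary~\ref{limiting} to write $E_\l$ block by block, absorbs each three-term factor via the same identity $(T_k+1)(T_kT_{k+1}-qT_{k+1}-q)=-q\,(T_k+1)$ (using that $E_m$ is right-divisible by $T_k+1$ and that the intervening factors commute with it), turns the remaining between-row factors into scalars the same way, treats $r=2$ in full and then inducts on $r$, with the final scalar emerging from $E_m^{\,2}=[1]_q\cdots[m]_q\,E_m$ and $1-q^m=(1-q)[m]_q$ exactly as you indicate. The only cosmetic difference is that the paper collects the squared copies of $E_m$ at the level of the whole element $E_m$ rather than of individual factors $(T_j+1)^2$, which does not change the argument.
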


\begin{proof}
If $r=1$, the lemma is tautological. 
Suppose that $r=2$, so that $n=2m$.
Recall the notation \eqref{ttf}.
By Corollary \ref{limiting}, 
$$
E_\l=E_m\,(1-q)\,ZE_m
$$ 
where $Z$ stands for the product
\begin{align*}
\v_{m+1}(q^0,q^1)\,\v_{m+2}(q^0,q^2)\,\v_{m+3}(q^0,q^3)\,
\v_{m+4}(q^0,q^4)\,\ldots\,\v_{2m-1}(q^0,q^{m-1})
&\,\times
\\
F_{m-1,m}\,
\v_{m+1}(q^1,q^2)\,\v_{m+2}(q^1,q^3)\,
\v_{m+3}(q^1,q^4)\,\ldots\,\v_{2m-2}(q^1,q^{m-1})
&\,\times
\\
\v_{m-2}(q^2,q^0)\,F_{m-1,m}\,\v_{m+1}(q^2,q^3)\,
\v_{m+2}(q^2,q^4)\,\ldots\,\v_{2m-3}(q^2,q^{m-1})
&\,\times
\\
\v_{m-3}(q^3,q^0)\,\v_{m-2}(q^3,q^1)\,F_{m-1,m}\,
\v_{m+1}(q^3,q^4)\,\ldots\,\v_{2m-4}(q^3,q^{m-1})
&\,\times
\\
&\hspace{5pt}\vdots
\\
\v_2(q^{m-2},q^0)\,\ldots\,\v_{m-2}(q^{m-2},q^{m-4})\,
F_{m-1,m}\,\v_{m+1}(q^{m-2},q^{m-1})
&\,\times
\\
\v_1(q^{m-1},q^0)\,\,\ldots\,
\v_{m-3}(q^{m-1},q^{m-4})\,\v_{m-2}(q^{m-1},q^{m-3})\,
F_{m-1,m}
&\,.
\end{align*}

Note that there are exactly $m$ lines in the above display.
If $m=1$ then $E_m=1$ and $Z=1$, so that $E_\l=(1-q)$ as required.

If $m\ge2$, consider the second line in the display.
The element $E_m$ is divisible on the right by
$T_{m-1}+1$, while the product in the first line
of the display commutes with $T_{m-1}+1$. But 
$(T_{m-1}+1)\,T_{m-1}=q\,(T_{m-1}+1)$ and therefore
$$
(T_{m-1}+1)\,F_{m-1,m}=-q\,(T_{m-1}+1)\,. 
$$
Hence in the second line of the display, we can
replace the factor $F_{m-1,m}$ by the scalar factor $-q\,$,
without changing the value of the product $E_m\,Z$.

If $m\ge3$, also consider
the third line in the display.
The element $E_m$ is divisible on the right by
$T_{m-2}+1$, while the product in the first line
of the display commutes with $T_{m-2}+1$.
After replacing $F_{m-1,m}$ by $-q$
in the second line, all factors in that line then commute
 with $T_{m-2}+1$. But
\begin{align*}
(T_{m-2}+1)\,\v_{m-2}(q^2,q^0) 
&=(T_{m-2}+1)\left(T_{m-2}+\frac{1-q}{1-q^{-2}}\right)
\\
&=(T_{m-2}+1)\left(q+\frac{1-q}{1-q^{-2}}\right)
=\frac{q-q^2}{1-q^2}\,(T_{m-2}+1)\,.
\end{align*}
Hence in the third line of the display, we can now
replace the factor $\v_{m-2}(q^2,q^0)$ by the scalar 
$(q-q^2)/(1-q^2)$,
without changing the value of the product $E_m\,Z$.
After this replacement, we apply the arguments like
those already applied to the second line, and replace
$F_{m-1,m}$ in the third line by $-q\,$,
without changing 
$E_m\,Z$.

By continuing these arguments for all $m$ lines of the display, 
we show that 
\begin{equation}
\label{eye}
E_\l=E_m(1-q)\,YE_m
\end{equation}
where $Y$ stands for the product
\begin{align*}
\v_{m+1}(q^0,q^1)\,\v_{m+2}(q^0,q^2)\,\v_{m+3}(q^0,q^3)\,
\v_{m+4}(q^0,q^4)\,\ldots\,\v_{2m-1}(q^0,q^{m-1})
&\,\times
\\
(-q)\,
\v_{m+1}(q^1,q^2)\,\v_{m+2}(q^1,q^3)\,
\v_{m+3}(q^1,q^4)\,\ldots\,\v_{2m-2}(q^1,q^{m-1})
&\,\times
\\
\frac{q-q^2}{1-q^2}\,(-q)\,\v_{m+1}(q^2,q^3)\,
\v_{m+2}(q^2,q^4)\,\ldots\,\v_{2m-3}(q^2,q^{m-1})
&\,\times
\\
\frac{q-q^3}{1-q^3}\,\frac{q-q^2}{1-q^2}\,(-q)\,
\v_{m+1}(q^3,q^4)\,\ldots\,\v_{2m-4}(q^3,q^{m-1})
&\,\times
\\
&\hspace{5pt}\vdots
\\
\frac{q-q^{m-2}}{1-q^{m-2}}\,\ldots\,\frac{q-q^2}{1-q^2}\,
(-q)\,\v_{m+1}(q^{m-2},q^{m-1})
&\,\times
\\
\frac{q-q^{m-1}}{1-q^{m-1}}\,\,\ldots\,
\frac{q-q^3}{1-q^3}\,\frac{q-q^2}{1-q^2}\,
(-q)
&\,.
\end{align*}
By collecting the scalar factors here and by performing cancellations,
$$
Y=(-1)^{m-1}\,q^{\,m(m-1)/2}\,[1]_q^{-1}\ldots[m-1]_q^{-1}E_m^{\,(m)}.
$$
Since the elements $E_m$ and $E_m^{\,(m)}$ of the algebra $\h_{2m}$ commute,
\eqref{eye} implies that
$$
E_\l
=(-1)^{m-1}\,q^{\,m(m-1)/2}\,[1]_q^{-1}\ldots[m-1]_q^{-1}\,
E_m^{\,2}\,(1-q)\,E_m^{\,(m)}
=f_m\,E_m\,E_m^{\,(m)}
$$
as required when $r=2$.
Here we used the relation \eqref{en2} for $m$ instead of $n$.

It remains to observe that for $r>2$,
Lemma \ref{Lemma 2} follows from the case
$r=2$. Consider the expression which
Corollary~\ref{limiting} provides for 
$E_\l$. This is an ordered product of factors corresponding 
to all pairs $(i,j)\in\Dp$.
According to Step~(1) of the corollary
we use the special order on $\Dp$,
so that the ordered set $\Dp$ is divided into the
subsets \eqref{dpp}. Then we also perform Steps (2\,,\,3\,,\,4)
which do not affect division into the subsets. 
The last two subsets in \eqref{dpp} are $\Dp_{r-1,r}$ and $\Dp_{rr}$.
The product of the factors of $E_\l$ corresponding to 
$\Dp_{rr}$ equals $E_m$. Further, consider
the factors
in our expression for $E_\l$ that precede the factors
corresponding to $\Dp_{r-1,r}$.
By considering 
the subset $\Dp_{r-1,r-1}$ and by using Lemma \ref{braid},
one can show that the product of these preceding
factors is divisible by $E_m$ on the right.
By applying to the last two rows of $\l$
the arguments used in the case $r=2$,
we can modify our expression for $E_\l$
without changing the value of that expression. 
Namely, we replace the factors corresponding to
$\Dp_{r-1,r}$ and $\Dp_{rr}$ by $f_m$ and $E_m^{\,(m)}$
respectively. 

Next we replace the factors corresponding to the subset
$\Dp_{r-2,r}$ by $f_m$ and change to $E_m^{\,(2m)}$ 
the element $E_m^{\,(m)}$ previously
appeared on the right of our expression. 
By continuing this process, we replace by
$
f_m^{\thinspace r-1}\,E_m^{\,(n-m)}
$
all factors corresponding to 
$\Dp_{1r}\,\ldots,\Dp_{rr}$ in the expression 
for $E_\l$, initially provided by Corollary \ref{limiting}.   
An induction on the number $r$ 
of parallel rows completes the proof.
\end{proof}

Let us now deal with an arbitrary Cherednik diagram $\l$.
Take any $(i,j)\in\Dp$ and suppose that
the numbers $i$ and $j$
occur in the rows $k$ and $l$ of $\L$ respectively. Here $k\le l\,$.
If $k=l$, denote by $h$ the quantity of numbers in $\L$ occurring
in those rows which precede
the row $k$ and are parallel to it. Denote
\begin{equation}
\label{tij}
t_{ij}=
\begin{cases} 
\,s_{j-i} 
&\text{if the rows $k$ and $l$ of $\l$ are not parallel;}
\\
\,1
&\text{if the rows $k$ and $l$ of $\l$ are parallel but $k<l\,$;}
\\
\,s_{j-i+h}
&\text{if $k=l\,$.}
\end{cases}
\end{equation}
Take the ordered product 
\begin{equation}
\label{reddec}
w_\l=\prod_{(i,j)\in\Dp}t_{ij}
\end{equation}
where we use our special order on the set $\Dp$.
For the diagram 
$\l=([1,2],[2,3],[2,3])$ considered before,
$$
w_\l=s_1s_3s_4s_3s_5s_4s_2s_1s_3s_2s_1\,.
$$

\begin{lemma}
\label{Lemma 3}
For any $\l\in\mathcal{C}_n$
the decomposition \eqref{reddec} in\/ $\mathcal{S}_n$ is reduced.
\end{lemma}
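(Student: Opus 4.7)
The plan is to show that the number of non-identity factors in \eqref{reddec} equals $\ell(w_\l)$; any product of simple reflections attaining the length of the element it represents is automatically reduced.

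By Lemma~\ref{comblemma}, the parallel rows of $\l$ form maximal consecutive blocks, each consisting of rows of a common length. Suppose these blocks have sizes $b_1,\ldots,b_t$ with common row lengths $m_1,\ldots,m_t$. Inspection of \eqref{tij} shows that the identity factors $t_{ij}=1$ occur precisely for pairs $(i,j)\in\Dp_{kl}$ with $k<l$ and $\l_k,\l_l$ parallel, giving $N:=\sum_{j=1}^t\binom{b_j}{2}m_j^2$ identity factors and $\binom{n}{2}-N$ non-identity factors.

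My proposed identification is $w_\l=u_\l\cdot w_0$, where $u_\l\in\mathcal{S}_n$ is the permutation that, for each maximal block of $b_j$ parallel rows occupying $b_jm_j$ consecutive positions in the row filling $\L$, reverses the order of the $b_j$ size-$m_j$ groups within this segment (keeping each group internally intact) and fixes all positions outside such blocks. A direct count of inversions gives $\ell(u_\l)=N$. Since $w_0$ is the longest element of $\mathcal{S}_n$, the standard identity $\ell(u_\l w_0)=\ell(w_0)-\ell(u_\l)$ then yields $\ell(w_\l)=\binom{n}{2}-N$, matching the count of non-identity factors and proving the lemma.

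The main obstacle is verifying $w_\l=u_\l w_0$ as permutations. I propose induction on the number of rows $r$: the base case $r=1$ is immediate since $\l$ then has no parallel rows, $u_\l=e$, and by Lemma~\ref{lemmafund1} the decomposition \eqref{reddec} is precisely the reduced word for $w_0$ from the special order. For the inductive step, remove the last row $\l_r$ to obtain $\l'$ with $n-m_r$ boxes; then $w_\l=w_{\l'}\cdot v_r$ with $v_r=\prod_{k=1}^{r}\prod_{(i,j)\in\Dp_{kr}}t_{ij}$, and using Lemma~\ref{comblemma} together with the exact form of the shift $h$ in \eqref{tij} one tracks the action of $v_r$ on $\{1,\ldots,n\}$ to verify that $w_{\l'}v_r=u_\l w_0$. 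An alternative strategy bypassing the explicit formula is to check the root-distinctness criterion directly: the sequence of positive roots associated to the non-identity factors of \eqref{reddec}, computed via iterated conjugation in the special order, consists of distinct elements of $\Dp$; this is best checked subword-by-subword over the subsets $\Dp_{kl}$, showing that the associated-root set equals $\Dp$ minus the pairs between distinct parallel rows.
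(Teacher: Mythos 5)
Your strategy is genuinely different from the paper's and is viable in principle: you want to compute $\ell(w_\l)$ outright by identifying $w_\l=u_\l w_0$ and match it against the count of non-identity letters of \eqref{reddec}, whereas the paper shows that the set $\mathcal L$ of pairs of types I and III (exactly those contributing non-identity factors) is biconvex, hence is the inversion set $\mathcal I_w$ of some $w$ with $w_0=ww'$ and $\ell(w_0)=\ell(w)+\ell(w')$, and then braids the reduced word \eqref{reddecspec} so that the non-identity part of \eqref{reddec} appears as a prefix of a reduced word for $w_0$ (moving the type II letters rightward is what produces the shifts $j-i\mapsto j-i+h$). Your bookkeeping is correct: the identity factors are exactly the type II pairs, there are $N=\sum_j\binom{b_j}{2}m_j^2$ of them, $\ell(u_\l)=N$, and one checks that $\mathcal I_{u_\l w_0}=\Dp\setminus\mathcal I_{u_\l}$ is the complement of the set of type II pairs, i.e.\ your $u_\l w_0$ coincides with the paper's $w$. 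For the running example $\l=([1,2],[2,3],[2,3])$ both sides equal $[6,5,2,1,4,3]$ of length $11$.

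The gap is that the identity $w_\l=u_\l w_0$, which you yourself call the main obstacle, carries the entire content of the lemma and is not actually proved by either of your two routes. In the induction, everything reduces to computing the permutation $v_r=\prod_{k}\prod_{(i,j)\in\Dp_{kr}}t_{ij}$ and verifying $w_{\l'}v_r=u_\l w_0$; this is far from a one-line check, since the product mixes the orders $<_1$ and $<_2$ on the various $\Dp_{kr}$, the shifted letters $s_{j-i+h}$ coming from $\Dp_{rr}$, and the deleted type II letters, and the shape of the answer changes according to whether row $r$ extends a block of parallel rows (in which case $u_{\l'}\neq u_\l$ on the positions of that block). The alternative route via the associated roots $\b_k=s_{i_1}\cdots s_{i_{k-1}}(\a_{i_k})$ is a correct criterion, but ``checked subword-by-subword'' is again a restatement of what must be proved: tracking the conjugations through the shifted and omitted letters is exactly where the difficulty sits. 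As written, the proposal is a correct guess for $w_\l$ together with a plan, not a proof. The most economical way to close the gap is essentially what the paper does: verify directly from Definition \ref{CD} and Lemma \ref{comblemma} that $\mathcal L$ satisfies Definition \ref{convex}(a), so that the special reduced word for $w_0$ can be rearranged with the $\mathcal L$-letters first, and then check that this prefix is letter-for-letter the non-identity part of \eqref{reddec}; a prefix of a reduced word is reduced.
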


\begin{proof}
Take $(i,j)\in\Dp$ and suppose that
the numbers $i$ and $j$
occur in the rows $k$ and $l$ of $\L$ respectively.
For the purpose of this proof, we will say that the pair
$(i,j)$ is of type I\,,\,II or III
if the rows $k$ and $l$ satisfy the conditions 
in the first, second or third line of \eqref{tij} respectively. 
A direct check of the conditions of Definition~\ref{convex}(a)
shows that the set $\mathcal L$ of all pairs of types I\,,\,III is biconvex.
Hence there are $w,w'\in\mathcal S_n$ such that  
$w_0=w\,w'$ and $\ell(w_0)=\ell(w)+\ell(w')$ while
$\mathcal I_w=\mathcal L$. This implies
that by using only
the relations $s_is_j=s_js_i$ for $|i-j|>1$ and
$s_is_{i+1}s_i=s_{i+1}s_is_{i+1}$ for $1\le i\le n-1$,
we can modify the reduced decomposition
\begin{equation}
\label{reddecspec}
w_0=\prod\limits_{(i,j)\in\mathcal P}s_{j-i}
\end{equation} 
to another one such the first  $\ell(w)$ pairs associated 
with the new decomposition are exactly those of types I and III\,.
Note that in \eqref{reddecspec} we used our special order on $\Dp$.
There is a procedure which changes 
\eqref{reddecspec} to a reduced decomposition 
whose first $\ell(w)$ elements are exactly
the simple transpositions from \eqref{reddec},
taken in the same order. It implies at once that 
\eqref{reddec} is a reduced decomposition of $w_\l=w$.

To show how this procedure works, consider a particular
case when the diagram $\l$ consists of four rows, of which
the second and third make a pair of parallel rows,
while the first and the fourth are not parallel to any other rows.
Our arguments will be easy to extend to an arbitrary diagram $\l$.
In this case the set $\Dp$, endowed with the special order, 
is uniquely divided into three subsets $A<B<C$ where
$B$ consists of pairs of type II while each of the subsets
$A$ and $C$ consist of pairs of types I\,,\,III\,.
Let us write $w_0=w_Aw_Bw_C$ accordingly.
We have to move the factor $w_B$ to the right.
In our case $B=\Dp_{23}$ so that
$$
w_B=\prod_{(i,j)\in\Dp_{23}}s_{j-i}\,.
$$
Further divide $C$ as $\Dp_{33}<D$, here
$\Dp_{33}$ consists
of pairs of type III\,. Accordingly,
$$
w_C=\biggl(\ \prod_{(i,j)\in\Dp_{33}}s_{j-i}\biggr)\,w_D\,.
$$
We have the following equalities of products of
sequences of simple transpositions:
$$
w_0\,=\,\,
w_Aw_B\,\biggl(\ \prod_{(i,j)\in\Dp_{33}}s_{j-i}\biggr)\,w_D=
$$
$$
w_A\,\biggl(\ \prod_{(i,j)\in\Dp_{33}}s_{j-i+h}\biggr)\,w_Bw_D=
$$
\begin{equation}
\label{last}
w_A\,\biggl(\ \prod_{(i,j)\in\Dp_{33}}s_{j-i+h}\biggr)\,w_D
\biggl(\ \prod_{(i,j)\in\Dp_{23}}s_{j-i+m}\biggr)
\end{equation}
where $m=\l_4$ and $h=\l_2$ as required in \eqref{tij}.
For the pairs $(i,j)$ from the subsets $A$ and $D$
we have $t_{ij}=s_{j-i}$. Hence the simple transpositions 
in \eqref{last} multiplied over the subsets $A\,,\Dp_{33}\,,D$ 
make precisely the right hand side of \eqref{reddec}.
\end{proof}

For each $m=1,2,\ldots$ denote by $p_m$ the number of pairs
of distinct parallel rows of $\l$ of length $m$. In particular, if 
$\l\in\mathcal{C}_n$ consists of
$r$ parallel rows of length $m$ each, then
$p_m=r\,(r-1)/2$. We have
$p_\l=p_1+p_2+\ldots$ in general.

\begin{proposition}
\label{enonzero} 
For some coefficients $a_w\in\mathbb C(q)$, we have the equality in $\h_n$
\begin{equation}
\label{desired}
E_{\l}=a\,T_{w_\l}+\sum_{\ell(w)<\ell(w_\l)}a_w\,T_w\,.
\end{equation}
where
$$
a=\prod_{m\ge1}
f_m^{\,p_m}\,.
$$
\end{proposition}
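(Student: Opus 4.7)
The strategy is to compute the leading term (in the length filtration on the $T_w$ basis) of the explicit product expression for $E_\l$ supplied by Corollary \ref{limiting}, by applying block-by-block the reduction technique from the proof of Lemma \ref{Lemma 2}.

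By the second part of Lemma \ref{comblemma}, the rows of $\l$ split into consecutive parallel blocks $B_1,\ldots,B_s$, with $B_u$ consisting of $r_u$ parallel rows of length $m_u$. The special order of Section 2 groups the subsets $\Dp_{kl}$ so that pairs with both $k,l$ inside a single block $B_u$ occupy a contiguous range in the ordered family \eqref{dpp}. This localization lets us run, inside each block separately, the same reduction that established Lemma \ref{Lemma 2}: the factors corresponding to pairs in $\bigsqcup_{k\le l,\,k,l\in B_u}\Dp_{kl}$, including the $(1-q)$ scalars from singular pairs outside $\Rl$ and the three-term factors from singular pairs of type II, collapse to
$$
f_{m_u}^{\,r_u(r_u-1)/2}\,E_{m_u}^{(h_u)}\,E_{m_u}^{(h_u+m_u)}\,\cdots\,E_{m_u}^{(h_u+(r_u-1)m_u)},
$$
where $h_u$ counts the boxes preceding $B_u$. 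This is precisely where the prescription $t_{ij}=1$ in \eqref{tij} for type II pairs is matched: the ``superfluous'' $T_kT_{k+1}$ leading terms from the three-term factors cancel against adjacent $(T_{k-j}+1)$-factors exactly as in Lemma \ref{Lemma 2}, leaving only the scalar $f_{m_u}$.

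After all blocks are so reduced, $E_\l$ equals $\prod_u f_{m_u}^{\,r_u(r_u-1)/2}$ times an ordered product consisting of (i) the shifted $E_{m_u}^{(h)}$'s, whose leading basis terms are $T_w$ for the longest elements of the relevant $\mathcal S_{m_u}$ subgroups (by Lemma \ref{Lemma 1}, since $E_m=\sum_{w\in\mathcal S_m}T_w$), and (ii) the remaining evaluated factors $\v_{ij}$ for inter-block pairs of type I, each contributing leading term $T_{s_{j-i}}$. Multiplying all these leading terms reassembles exactly the decomposition \eqref{reddec} of $w_\l$, which is reduced by Lemma \ref{Lemma 3}. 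Hence the coefficient of $T_{w_\l}$ in $E_\l$ is $\prod_u f_{m_u}^{\,r_u(r_u-1)/2}=\prod_m f_m^{\,p_m}=a$, and no $T_w$ with $\ell(w)>\ell(w_\l)$ appears.

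The main technical obstacle will be verifying that the Lemma \ref{Lemma 2} reduction indeed goes through locally within each parallel block, despite the presence of surrounding factors coming from pairs involving other blocks. This is handled by the contiguity of within-block factors in the special order (ensured by \eqref{dpp} and Lemma \ref{comblemma}) together with the braid and commutation relations \eqref{perp}--\eqref{YB}, which permit the intra-block cancellation to proceed without disturbing the flanking inter-block factors, exactly as in the iterative step that extends Lemma \ref{Lemma 2} from $r=2$ to arbitrary $r$.
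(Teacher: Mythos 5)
Your proposal is correct and follows essentially the same route as the paper: starting from the product expression of Corollary \ref{limiting}, applying the reduction of Lemma \ref{Lemma 2} to each maximal block of parallel rows (which Lemma \ref{comblemma} guarantees is contiguous), replacing the inter-row factors within a block by the scalars $f_m$ and shifting the diagonal factors to $E_m^{(h)}$, and then reading off the leading term via the reduced decomposition \eqref{reddec} and Lemma \ref{Lemma 3}. The only difference is cosmetic: you make the final leading-term bookkeeping more explicit than the paper does.
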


\begin{proof}
Take any maximal subsequence of parallel rows  
of $\l$. In particular,
there is no row before or after these rows, parallel to them.
By Lemma \ref{comblemma}, there is no row of $\l$
occuring in between of these parallel rows.
Suppose that this subsequence consists of more than one row.
Let $m$ the length of any row in this subsequence.

Consider the expression which
Corollary~\ref{limiting} initially provides for 
$E_\l$. By using Lemma \ref{Lemma 2} we can modify
this expression, without affecting 
the value of $E_\l$. Let $k$ and $l\,$ be any two
rows from our sequence of parallel rows. In the case $k<l$
we replace by the scalar factor $f_m$ 
the product of all factors in our expression for $E_\l$
corresponding to the pairs $(i,j)\in\Dp_{kl}\,$.
Note that $t_{ij}=1$ for these pairs, see 
\eqref{tij}.

In the case $k=l$ the factor
corresponding to $(i,j)\in\Dp_{kk}$ 
in our initial expression for $E_\l$
is $\v_{ij}^{\,j-i}(q^{\,c_i},q^{\,c_j})$.
We replace this factor by 
$\v_{ij}^{\,j-i+h}(q^{\,c_i},q^{\,c_j})$ where
$h$ is the quantity of numbers in $\L$ occurring
in the rows which precede
the row $k$ and are parallel to it. 
Here we use the maximality of our
sequence of parallel rows. 
In this case $t_{ij}=s_{j-i+h}\,$, see again the definition \eqref{tij}.
Using the so modified expression for $E_\l$
along with Lemma \ref{Lemma 3}, we get
Proposition \ref{enonzero}.
\end{proof}

Here is a corollary to our proof of Proposition \ref{enonzero};
it refines Corollary \ref{limiting}.

\begin{corollary}
\label{shortening} 
The element $E_{\l}\in\h_n$ can be calculated as follows:
\begin{enumerate}
\item[(1)] 
arrange the elementary factors in \eqref{phir} 
according to the special order;
\item[(2)] 
for every row of\/ $\L$ of length\/ $m=1,2,\ldots$
with\/ $h=1,2,\ldots$ entries  
in the rows preceding and parallel to that row,
replace by\/ $E_m^{\,(h)}$
the product $E_m$ of factors corresponding to all pairs\/ $(i,j)$
where both $i$~and~$j$ are in that row;
\item[(3)] 
for every two distinct parallel rows of $\l$ of length $m=1,2,\ldots$
replace by the scalar factor\/ $f_m$
the product of factors corresponding to all $(i,j)$
where $i$~and~$j$ are respectively
in the first and second of the two parallel rows;
\item[(4)] 
replace the two adjacent factors corresponding
to any remaining singular pair $(i,j)$ by a three-term factor;
\item[(5)] 
evaluate the factors corresponding to all remaining non-singular pairs.
\end{enumerate}
\end{corollary}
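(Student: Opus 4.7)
The plan is to derive Corollary \ref{shortening} as a packaging of the algorithm that was already implemented in the proofs of Lemma \ref{Lemma 2} and Proposition \ref{enonzero}. Starting from the expression for $E_\l$ given by Corollary \ref{limiting}, one only needs to identify which subproducts in that expression coincide with the shifted row-elements $E_m^{\,(h)}$ and with the scalars $f_m$, and leave the rest of the factors to be treated by steps (4) and (5), as in the original corollary.

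First I would fix any maximal sequence of consecutive parallel rows of $\l$, say of common length $m$ and consisting of $r\ge 1$ rows. By the first assertion of Lemma \ref{comblemma}, no non-parallel row intervenes within such a block, so the associated subsets $\Dp_{kl}$ of pairs (for $k\le l$ both inside the block) form a contiguous segment of the special order, by the layout \eqref{dpp}. If $r=1$, the block contributes only the factor $E_m^{\,(h)}$ coming from $\Dp_{kk}$, where $h$ is the number of entries in earlier parallel rows; this is step (2) of the statement. If $r\ge 2$, I would apply the argument of Lemma \ref{Lemma 2}, extended to multiple blocks exactly as in the proof of Proposition \ref{enonzero}: the factors coming from $\Dp_{kl}$ with $k<l$ inside the block each collapse to the scalar $f_m$, while the factors coming from $\Dp_{kk}$ for each row in the block collapse to $E_m^{\,(h)}$ with the appropriate cumulative offset $h$. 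This accounts for steps (2) and (3).

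Next, once every maximal block of parallel rows has been so absorbed, the remaining factors in the ordered product correspond to pairs outside any block of parallel rows. For these, the remaining singular pairs are exactly those elements of $\Rl$ whose two endpoints lie in rows that are not parallel to each other, so the fusion-to-three-term-factor procedure from step (2) of Corollary \ref{limiting} applies verbatim; this yields step (4). The non-singular pairs outside the parallel-row blocks are then evaluated directly at the point \eqref{point}, giving step (5).

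The main obstacle, already surmounted in the proof of Proposition \ref{enonzero}, is to verify that the local simplification within one block of parallel rows can be carried out without disturbing either other blocks or the three-term fusions performed for singular pairs in $\Rl$ outside the blocks. The relevant tools are Lemma \ref{braid}, which provides the braid and commutation moves, together with the absorption identities $T_k E_m = q\,E_m$ (giving $(T_k+1)E_m=(q+1)E_m$) and $(T_k+1)F_{k,k+1}=-q\,(T_k+1)$, which allow the neighbouring factors to be commuted past or absorbed into the $E_m^{\,(h)}$-projectors. Checking block by block that these moves can be interleaved with the singular-pair fusions from Corollary \ref{limiting}, the algorithm of the corollary produces exactly the element $E_\l$.
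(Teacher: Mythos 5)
Your proposal is correct and follows essentially the same route as the paper, which presents this corollary precisely as a repackaging of the modifications carried out in the proof of Proposition \ref{enonzero} (itself built on Corollary \ref{limiting} and Lemma \ref{Lemma 2}). The only imprecision is your claim that the subsets $\Dp_{kl}$ attached to a block of parallel rows form a contiguous segment of the special order --- by \eqref{dpp} they are interleaved with subsets $\Dp_{a l}$ for rows $a$ outside the block --- but this does not affect the argument, since you (like the paper) handle the required rearrangements by deferring to the commutation moves of Lemma \ref{braid} as used in the proof of Lemma \ref{Lemma 2}.
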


For example, consider once again the diagram 
$\l=([1,2],[2,3],[2,3])\,$. Here
\begin{gather*} 
E_\l=
q\,(q^2-1)(T_1+1)(T_3+1)
(T_4-q)(T_3-q^2(q+1)^{-1})\,\times
\\
(T_5T_4-q\,T_5-q)(T_2-q)(T_1-q^2(q+1)^{-1})(T_3T_2-q\,T_3-q)(T_1+1).
\end{gather*}


\section*{Acknowledgments}

We are grateful to Ivan Cherednik 
for illuminating conversations,
and to the anonymous referee for helpful remarks.
We are also grateful to George Lusztig and to Nanhua Xi for drawing our
attention to their works \cite{L3} and \cite{X1,X2}. 
Our work has been supported by the EPSRC
and by the London Mathematical Society.



\end{document}